\pdfoutput=1
\documentclass[reqno,oneside,letterpaper, 12pt]{amsart}
\usepackage{mathrsfs}
\usepackage[foot]{amsaddr}
\usepackage{amssymb}
\usepackage{wasysym}
\usepackage[utf8]{inputenc}
\usepackage{amsmath, amssymb,bm, cases, mathtools, thmtools}
\usepackage{verbatim}
\usepackage{graphicx}\graphicspath{{figures/}}
\usepackage{multicol}
\usepackage{tabularx}
\usepackage[usenames,dvipsnames]{xcolor}
\usepackage{mathrsfs}
\usepackage{url}
\usepackage[normalem]{ulem}
\usepackage{xstring}
\usepackage[shortlabels]{enumitem}

\usepackage[%
    minnames=1,maxnames=99,maxcitenames=3,
    style=authoryear,
    doi=false,url=false,
    firstinits=false,hyperref,natbib,backend=bibtex]{biblatex}
\renewbibmacro{in:}{%
  \ifentrytype{article}{}{\printtext{\bibstring{in}\intitlepunct}}}
\bibliography{biblio}

\usepackage[colorlinks,citecolor=blue,urlcolor=blue,linkcolor=RawSienna]{hyperref}
\usepackage{hypernat}
\usepackage{datetime}

\DeclareMathAlphabet\EuRoman{U}{eur}{m}{n}
\SetMathAlphabet\EuRoman{bold}{U}{eur}{b}{n}

\usepackage{euscript,microtype}
\usepackage[capitalize]{cleveref}

\crefname{assumption}{Assumption}{Assumptions}
\crefname{claim}{Claim}{Claims}

\makeatletter
\let\reftagform@=\tagform@
\def\tagform@#1{\maketag@@@{\ignorespaces\textcolor{gray}{(#1)}\unskip\@@italiccorr}}
\renewcommand{\eqref}[1]{\textup{\reftagform@{\ref{#1}}}}
\makeatother

\definecolor{WowColor}{rgb}{.75,0,.75}
\definecolor{SubtleColor}{rgb}{0,0,.50}

\newcounter{margincounter}

\declaretheorem[style=plain,numberwithin=section,name=Theorem]{theorem}
\declaretheorem[style=plain,sibling=theorem,name=Lemma]{lemma}

\declaretheorem[style=plain,sibling=theorem,name=Corollary]{corollary}
\declaretheorem[style=plain,sibling=theorem,name=Claim]{claim}

\declaretheorem[style=definition,sibling=theorem,name=Definition]{definition}
\declaretheorem[style=definition,sibling=theorem,name=Notation]{notation}

\crefformat{conditionINNER}{#2(#1)#3}
\crefmultiformat{conditionINNER}
  {(#2#1#3)}
  { and~(#2#1#3)}
  {, (#2#1#3)}
  { and~(#2#1#3)}
\Crefformat{conditionINNER}{Condition~#2(#1)#3}
\Crefmultiformat{conditionINNER}
  {Conditions~(#2#1#3)}
  { and~(#2#1#3)}
  {, (#2#1#3)}
  { and~(#2#1#3)}

\declaretheoremstyle[
    spaceabove=-6pt,
    spacebelow=6pt,
    headfont=\normalfont\bfseries,
    bodyfont = \normalfont,
    postheadspace=1em,
    qed=$\square$,
    headpunct={{}}]{myproofstyle}

\numberwithin{equation}{section}
\numberwithin{theorem}{section}

\usepackage{amssymb}
\usepackage{mathrsfs}
\usepackage{leftidx}

\usepackage{setspace}
\usepackage{thmtools}
\usepackage{geometry}
\usepackage{enumitem}
\usepackage{caption}
\usepackage{float}
\usepackage[colorinlistoftodos]{todonotes}
\usepackage[en-US]{datetime2}
\DTMlangsetup{showdayofweek=false} %

\def\[#1\]{\begin{align}#1\end{align}}
\def\*[#1\]{\begin{align*}#1\end{align*}}

\newcommand{\Reals}{\mathbb{R}}
\newcommand{\Nats}{\mathbb{N}}

\newcommand{\NNReals}{\Reals_{\ge 0}}

\DeclareMathOperator*{\newlim}{\mathrm{lim}\vphantom{\mathrm{infsup}}}

\DeclareMathOperator*{\newinf}{\mathrm{inf}\vphantom{\mathrm{infsup}}}
\DeclareMathOperator*{\newsup}{\mathrm{sup}\vphantom{\mathrm{infsup}}}
\renewcommand{\lim}{\newlim}

\renewcommand{\inf}{\newinf}
\renewcommand{\sup}{\newsup}

\newcommand{\cF}{\mathcal F}
\newcommand{\cG}{\mathcal G}

\newcommand{\NSE}[1]{{^{*}#1}}
\newcommand{\ST}{\mathsf{st}}

\newcommand{\PowerSet}{\mathscr{P}}

\newcommand{\cA}{\mathcal{A}}

\newtheorem{open problem}{Open Problem}

\newcommand{\Loeb}[1]{\overline{#1}}

\newcommand{\interior}[1]{%
  {\kern0pt#1}^{\mathrm{o}}%
}

\newcommand{\refproof}[1]{See \cref{#1} for \IfSubStr{#1}{,}{proofs}{a proof}. }

\newif\iflongform
\longformtrue

\iflongform

\else

\fi

\renewcommand\thmcontinues[1]{Continued}

\makeatletter
\providecommand*{\toclevel@definition}{0}
\providecommand*{\toclevel@theorem}{0}
\providecommand*{\toclevel@lemma}{0}
\makeatother
\usepackage{leftidx}

\makeatletter
\patchcmd{\@maketitle}
  {\ifx\@empty\@dedicatory}
  {\ifx\@empty\@date \else {\vskip3ex \centering\footnotesize\@date\par\vskip1ex}\fi
   \ifx\@empty\@dedicatory}
  {}{}
\patchcmd{\@maketitle}   %
  {\ifx\@empty\@date\else \@footnotetext{\@setdate}\fi}
  {}{}{}
\makeatother

{\color{blue}\title[]{Loeb Equivalence for General Internal Probability Spaces}}

\author[H.~Duanmu]{Haosui Duanmu$^{1}$}
\address{$^{2}$ Institute for Advanced Study in Mathematics, Harbin Institute of Technology}
\email{duanmuhaosui@hotmail.com}

\author[X.~Y.~Liu]{Xinyu Liu$^{1}$}
\email{liuxinyu@stu.hit.edu.cn}

\author[D.~Schrittesser]{David Schrittesser$^{1}$}
\email{david@logic.univie.ac.at}

\date{\today}

\newcommand{\cM}{\mathcal{M}}
\newcommand{\cN}{\mathcal{N}}
\newcommand{\cH}{\mathcal{H}}

\newtheorem{question}{Question}

\begin{document}
{ }\thanks{We gratefully acknowledge support from the National Science Foundation of China Grant No. 8220004225.}

\subjclass{28E05 (primary), 03H05, 26E35}

\maketitle

\begin{abstract}

Loeb measure theory, introduced in \citet{Loeb75}, stands as one of the most influential concepts in nonstandard analysis, underpinning nearly all applications in probability, stochastic processes, and mathematical economics. The paper resolves a fundamental open problem in Loeb measure theory originally posed by \citet{keisun}: let $(\Omega,\mathcal{F},\mu)$ and $(\Omega,\mathcal{G},\nu)$ be two Loeb equivalent internal probability spaces, and $\cH$ be the internal algebra generated from 
$\mathcal{F}\cup\mathcal{G}$. Does there exist an internal probability measure $P$ on $\cH$ such that $(\Omega,\mathcal{H},P)$ is Loeb equivalent to $(\Omega,\mathcal{F},\mu)$?
While \citet{loebeq23} recently provided a positive answer for hyperfinite probability spaces, the problem remained open for general internal probability spaces. We establish the existence of such an internal probability measure  for all internal probability spaces. 

\end{abstract}

\section{Introduction and Preliminaries}

Loeb measure theory is the cornerstone of applied nonstandard analysis. It has found applications in many areas in mathematics: for instance, in probability theory,\footnote{See for example \citet{andersonisrael}, \citet{localtime}, \citet{Keisler87}, \citet{brownfrac}, \citet{indmatching}, \citet{drw21}, and \citet{duffie25}} mathematical economics,\footnote{See for example \citet{nsexchange}, \citet{core1}, \citet{alipnas96}, \citet{sunlarge06}, \citet{anderson08}, and \citet{berknash24}} and statistics.\footnote{See for example \citet{nsbayes} and \citet{nscredible}} However, several fundamental problems in Loeb measure theory remain unresolved. 
\citet{keisun} posed four open problems on Loeb equivalence between internal probability spaces. 
Three of these have since been fully resolved by by \citet{loebeq21} and \citet{loebeq23}. This paper provides a complete solution to the final remaining problem.

We begin by introducing some fundamental concepts in Loeb measure theory. 
Given an internal probability space $(\Omega, \cF, \mu)$, its Loeb extension is defined to be the countably additive probability space $(\Omega, \bar{\cF}, \bar{\mu})$, where $\bar{\cF}$ consists of all sets $B \subseteq \Omega$ such that\footnote{In the following formula, we use $\ST$ to denote the standard part map.}
\begin{equation}
\sup\{\ST(\mu(A)) : B \supset A \in \cF\} = \inf\{\ST(\mu(C)) : B \subseteq C \in \cF\} \nonumber
\end{equation}
and $\bar{\mu}(B)$ is defined to be the above supremum.  \citet{keisun} introduced the following definition to compare two internal probability spaces.

\begin{definition}\label{def:loeb_equiv}
Let $\cM = (\Omega, \cF, \mu)$ and $\cN = (\Omega, \cG, \nu)$ be two internal probability spaces. We say $\cN$ \textbf{Loeb extends} $\cM$ if $\bar{\cG} \supset \bar{\cF}$ and $\bar{\nu}$ extends $\bar{\mu}$ as a function. We say $\cN$ is \textbf{Loeb equivalent} to $\cM$ if $\bar{\cF} = \bar{\cG}$ and $\bar{\nu} = \bar{\mu}$.
\end{definition}

Among the questions that \citet{keisun} proposed, the third one remains incompletely addressed in current research: 

\begin{question}\label{ksquestion}
 Suppose $\cM$ is Loeb equivalent to $\cN$, and let $\cH$ be the internal set algebra generated by $\cF \cup \cG$. Must there be an internal probability measure $P$ on $\cH$ such that $\cM$ is Loeb equivalent to $(\Omega, \cH, P)$?   
\end{question}

An internal probability space $(\Omega, \cF, \mu)$ is \emph{hyperfinite} if the internal algebra $\cF$ is hyperfinite.\footnote{Note that the internal sample space $\Omega$ is not necessarily hyperfinite.}
\citet{loebeq23} provide an affirmative answer to \cref{ksquestion} if $\cM$ and $\cN$ are hyperfinite. For general internal probability space, it remains unclear whether $\cH$ is a subset of $\Loeb{\cF}$, and consequently \cref{ksquestion} remains open. 
In this paper, we 
resolve \cref{ksquestion} in complete generality. Our proof proceeds as follows: 

In \cref{sec2}, discuss some basic facts of algebras of sets.
In particular, we provide an explicit normal form for elements in the generated algebra $\cH$, mirroring well-known arguments in measure theory or theory of Boolean algebras. 

In \cref{sec3}, we re-frame the problem in terms what we shall call \emph{essentially disjoint covers}, namely covers with sets whose pairwise intersections are contained in one fixed, global Loeb null set.
We show that such an essentially disjoint cover can indeed be found in several steps:
First, we establish the existence of what we call \emph{tight covers}.
We then show that the total size of the intersection errors can be estimated up to finite order by an intersection of finitely many (indeed, two) sets, each of which is a union of members of the cover. 
For a tight cover, the intersection of two such unions must be small; in other words, a tight cover must be an essentially disjoint cover. Together, this establishes the existence of an essentially disjoint cover.

This line of reasoning allows us to show $\cH \subseteq \bar{\cF}$. 
Finally, in \cref{sec4}, we resolve \cref{ksquestion} by a crucial application of a result by \citet{KR83}.

\section{Basic facts about algebras of sets}\label{sec2}

In this section, we show that 
any arbitrary element $H \in \cH$ can be written as the union of intersections of elements from $\cF$ and $\cG$. 
This canonical representation serves as the foundation for applying our subsequent approximation methods. 

We first take this opportunity to discuss the set of atoms in the algebra generated by a finite collection of sets. 
\begin{notation}\label{notation}
Suppose $\cF$ is an algebra of sets (a Boolean algebra) on the underlying set $\Omega$ and suppose $\{F_1, \hdots, F_k\} \subseteq \cF$ are given.
Then let us write,
for each set $S \subseteq \{1,\hdots, k\}$,
\[\nonumber
F_S = \begin{cases}
\bigcap_{i \leq k} F_i &\text{if $S = \Omega$}\\
\bigcap_{i \leq l} (\Omega\setminus F_i)  &\text{if $S = \emptyset$}\\
\left( \bigcap_{i \in S} F_i \right) \cap \left( \bigcap_{i \in \{1,\hdots, k\}\setminus S} (\Omega\setminus F_i) \right)&\text{if $S \notin \{\Omega,\emptyset\}$.}
\end{cases}
\]
Indeed, interpreting Boolean operations relative to $\Omega$ as we shall do throughout this paper, the last line without the qualification $S\notin \{\Omega,\emptyset\}$ suffices for this definition.
As is well-known, an equivalent definition is
    \[
\nonumber
F_S = \{ x \in \Omega \mid (\forall i \leq k)\;  x \in F_i \iff i \in S\}.
    \]
Moreover, let us use the short-hand
\[\nonumber
F_{-S} = F_{\{1,\hdots, k\}\setminus S}.
\]
It is also well-known that the non-empty members of $\big\{F_S \mid S \subseteq\{1, \hdots, k\}\big\}$ are precisely the atoms (the non-empty, $\subseteq$-minimal elements) of 
the subalgebra of $\cF$ generated by $\{F_1, \hdots, F_k\}$.

We need this notation so often that we issue the following convention: 
Whenever $\{X_1, \hdots, X_k\}$ is a family of sets in an algebra and $S\subseteq \{1, \hdots, k\}$,
we write $X_S$ for the set defined from $\{X_1, \hdots, X_k\}$ like $F_S$ was defined from $\{F_1, \hdots, F_k\}$ above (that is, reusing the same letter $X$ to indicate provenance of $X_S$ from $\{X_1, \hdots, X_k\}$).
\end{notation}

It should be clear that the sets $\big\{F_S \mid S \subseteq\{1, \hdots, k\}\big\}$ form a partition of $\Omega$; for the readers convenience, we include the straightforward verification of this fact.

\begin{claim}\label{particlaim}
The family $\{F_L : L\subseteq I\}$ forms a finite partition of $\Omega$ into mutually disjoint (not necessarily empty) sets.  
\end{claim}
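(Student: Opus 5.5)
The proof uses the pointwise characterization from \cref{notation}:
\[\nonumber
F_L = \{ x \in \Omega \mid (\forall i \leq k)\; x \in F_i \iff i \in L\},
\]
where $I=\{1,\hdots,k\}$. To each point $x\in\Omega$ we attach its \emph{signature} $L_x = \{ i \in I : x \in F_i\}$. Then $x\in F_L$ holds exactly when $L = L_x$. The whole claim follows from this: membership in some $F_L$ is decided by a single subset of $I$, and $x$ determines that subset.

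We check three things.

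\emph{Finiteness.} The family is indexed by the power set of $I$, so it has at most $2^k$ members.

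\emph{Covering.} Given $x \in \Omega$, put $L = L_x$. For $i\in L$ we have $x\in F_i$, and for $i\in I\setminus L$ we have $x\in\Omega\setminus F_i$. So $x$ lies in the intersection defining $F_L$. This covers the edge cases $L=\emptyset$ and $L=I$, since all complements are taken relative to $\Omega$ and an empty intersection is read as $\Omega$.

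\emph{Disjointness.} Let $L\neq L'$ be subsets of $I$, and pick $i$ in their symmetric difference, say $i\in L\setminus L'$. Then $F_L\subseteq F_i$ and $F_{L'}\subseteq \Omega\setminus F_i$, so $F_L\cap F_{L'}=\emptyset$.

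Nothing here is hard. The only point needing care is that the case split in \cref{notation} must agree with the uniform formula, in particular the $S=\emptyset$ and $S=I$ cases. Once all complements are taken in $\Omega$, the uniform formula handles both. Some $F_L$ may be empty, which is why the claim allows non-empty members to be absent. No internality is needed: the construction uses finitely many Boolean operations, so in the nonstandard setting the $F_L$ stay in $\cF$, and the same argument applies verbatim by transfer for hyperfinite $k$.
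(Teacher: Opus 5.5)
Your proof is correct and follows the paper's argument. Disjointness comes from an index $i\in L\setminus L'$, which forces $F_L\subseteq F_i$ and $F_{L'}\subseteq\Omega\setminus F_i$, and covering comes from taking $L$ to be the set of indices $i$ with $x\in F_i$; the only cosmetic difference is that you handle covering uniformly through the signature $L_x$, whereas the paper treats the case $x\notin\bigcup_{i\le k}F_i$ (giving $x\in F_{\emptyset}$) separately.
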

\begin{proof}
It is clear that the family $\{F_L : L\subseteq I\}$ is finite. Let $L, L'\subset I$ be two different sets. Without loss of generality, there exists $i_0\in L$ but $i_0\not\in L'$. For any $x\in F_{L}$, we have $x\in F_{i_0}$. As $i_0\not\in L'$, we have $x\not\in F_{L'}$. Hence, we have $F_{L}\cap F_{L'}=\emptyset$. 

We now show that $\Omega=\bigcup_{L\subset I}F_{L}$. It is clear that $\bigcup_{L\subset I}F_{L}\subset \Omega$. Fix $x\in \Omega$. 
\begin{itemize}
    \item Suppose $x\in \Omega\setminus \bigcup_{i\leq k}F_i$. Then we have $x\in F_{\emptyset}$;
    \item Suppose $x\in \bigcup_{i\leq k}F_i$. Then there exists $L\subset I$ such that $x\in F_i$ if $i\in L$ and $x\not\in F_i$ if $i\not\in L$. 
\end{itemize}
Hence, we conclude that $\{F_L : L\subseteq I\}$ forms a finite partition of $\Omega$ into mutually disjoint (not necessarily empty) sets.  
\end{proof}

We next discuss a standard representation for the elements of the algebra generated by $\cF \cup \cG$, where $\cF$ and $\cG$ are algebras.

\begin{lemma}\label{lem:canonical_form}
Let $\cF$ and $\cG$ be algebras of sets on $\Omega$ and $\cH$ be the algebra generated by $\cF \cup \cG$. For every $H \in \cH$, there exists a sequence $\{(F_i, G_i)\}_{i=1}^k$ with $F_i \in \cF, G_i \in \cG$, and $k \in \Nats$, such that $H = \bigcup_{i=1}^k (F_i \cap G_i)$.
\end{lemma}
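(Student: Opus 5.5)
The plan is the usual ``good sets'' argument. Let $\mathcal{D}$ be the collection of all subsets of $\Omega$ of the form $\bigcup_{i=1}^k (F_i\cap G_i)$ with $k\in\Nats$, $F_i\in\cF$, $G_i\in\cG$. Since $\cH$ is the smallest algebra containing $\cF\cup\cG$, and clearly $\mathcal{D}\subseteq\cH$, it suffices to show that $\mathcal{D}$ is itself an algebra containing $\cF\cup\cG$. Then $\cH\subseteq\mathcal{D}$, which is the statement of \cref{lem:canonical_form}.

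First, $\cF\cup\cG\subseteq\mathcal{D}$: write $F=F\cap\Omega$ and $G=\Omega\cap G$, using $\Omega\in\cF$ and $\Omega\in\cG$. In particular $\emptyset,\Omega\in\mathcal{D}$. Closure under finite unions is immediate, since we can concatenate the two finite sequences of pairs.

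The main step is closure under complement. Given $H=\bigcup_{i=1}^k(F_i\cap G_i)$, De Morgan's laws give
\[\nonumber
\Omega\setminus H=\bigcap_{i=1}^k\big((\Omega\setminus F_i)\cup(\Omega\setminus G_i)\big).
\]
Each factor $(\Omega\setminus F_i)\cup(\Omega\setminus G_i)$ lies in $\mathcal{D}$, being $(F_i^c\cap\Omega)\cup(\Omega\cap G_i^c)$. It therefore suffices to show that $\mathcal{D}$ is closed under finite (indeed binary) intersections. Given $\bigcup_i(F_i\cap G_i)$ and $\bigcup_j(F'_j\cap G'_j)$, distributivity gives
\[\nonumber
\Big(\bigcup_i (F_i\cap G_i)\Big)\cap\Big(\bigcup_j (F'_j\cap G'_j)\Big)=\bigcup_{i,j}\big((F_i\cap F'_j)\cap(G_i\cap G'_j)\big).
\]
Here $F_i\cap F'_j\in\cF$ and $G_i\cap G'_j\in\cG$, because $\cF$ and $\cG$ are algebras. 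A trivial induction on $k$ then handles the $k$-fold intersection, so $\Omega\setminus H\in\mathcal{D}$.

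The only mildly delicate point is this complement step; once binary intersections are settled via distributivity, it is routine. As an alternative one could use \cref{particlaim}: pass to the atoms $F_S$ of the algebra generated by $F_1,\dots,F_k$ and $G_T$ of the algebra generated by $G_1,\dots,G_k$. Then $H$ and its complement are both unions of some of the sets $F_S\cap G_T$, which gives the normal form directly. The closure argument above is shorter, so I would use it.
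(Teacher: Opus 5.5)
Your proof is correct and follows the paper's argument: both show that the family of finite unions $\bigcup_{i\le k}(F_i\cap G_i)$ is an algebra, and both handle complements via De Morgan's laws. The differences are cosmetic. The paper expands $\bigcap_{i\le k}(A_i\cup B_i)$ in one step as a union over subsets $S\subseteq\{1,\dots,k\}$, whereas you obtain closure under binary intersections by distributivity and then induct. You also explicitly check $\cF\cup\cG\subseteq\mathcal{D}$, which the paper leaves implicit.
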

Note that the lemma makes no claim about elements $H$ of the $\sigma$-algebra generated by $\cF \cup \cG$; rather, our claim is about elements of the \emph{(finitely) generated algebra}. 
While an analogous statement does hold for the $\sigma$-algebra, this is of no use to us.

The proof of the lemma may be implicit in introductory texts on measure theory, probability, or Boolean algebras; but since the precise statement we need does not seem to be stated in any of these texts (that we know of) we decided to give a short proof for the convenience of the reader.

\begin{proof}
Define the collection $\mathcal{I} \triangleq \left\{ \bigcup_{i=1}^k (F_i \cap G_i) \mid k \in \Nats, \{F_i\}_{i\leq k} \subseteq \cF, \{G_i\}_{i \leq k} \subseteq \cG \right\}$. We show that $\mathcal{I}$ is an algebra.

\noindent \textbf{Closure under Finite Union:} Let $I_1, I_2 \in \mathcal{I}$. By definition, it is obvious that $I_1 \cup I_2 \in \mathcal{I}$.

\noindent \textbf{Closure under Complement:} Let $I \in \mathcal{I}$ with $I = \bigcup_{i=1}^k (F_i \cap G_i)$. By De Morgan's laws:
\begin{equation}
I^\complement = \left( \bigcup_{i=1}^k (F_i \cap G_i) \right)^\complement = \bigcap_{i=1}^k \left( F_i^\complement \cup G_i^\complement \right). \nonumber
\end{equation}
(Recall here that we consider Boolean operations such as complements to be relative to $\Omega$.)
Since $\cF$ and $\cG$ are algebras, let $A_i \triangleq F_i^\complement \in \cF$ and $B_i \triangleq G_i^\complement \in \cG$. The above expression becomes $I^\complement = \bigcap_{i=1}^k (A_i \cup B_i)$. Now:
\begin{equation}
\bigcap_{i=1}^k (A_i \cup B_i) = \bigcup_{S \in \PowerSet(\{1, 2, \dots, k\})} \left( \left(\bigcap_{i \in S} A_i \right) \cap \left(\bigcap_{j \in S^\complement} B_j \right) \right). \nonumber
\end{equation}
(E.g., to see an element $x$ of the left-hand side is also an element of the right-hand side, let $S=\{i \mid x \in A_i\}$.)
Invoking the conventions discussed in \cref{notation}, consider the sets $A_S \in \cF$ and $B_{-S} \in \cG$. 
The expression can be rewritten as:
\begin{equation}
I^\complement = \bigcup_{S \in \PowerSet(\{1, 2, \dots, k\})} (F_S \cap B_{-S}).\nonumber
\end{equation}
Thus, it is clear that $I^\complement \in \mathcal{I}$. 

Since $\mathcal{I}$ contains the empty set and is closed under finite unions and complements, $\mathcal{I}$ is an algebra. Because $\cH$ is the smallest algebra containing $\cF \cup \cG$, we conclude that $\cH \subseteq \mathcal{I}$ (of course it would not be hard to show that in fact, $\mathcal I = \mathcal H$). Thus, every $H \in \cH$ has the desired canonical form.
\end{proof}

We next verify that we can convert this canonical representation into a disjoint form, which is crucial for our later measure-theoretic approximations.

\begin{lemma}\label{lem:canonical_disjoint}
Let $\cF$ and $\cG$ be algebras of sets and $\cH$ be the algebra generated by $\cF \cup \cG$. For every $H \in \cH$, there exists a sequence $\{(F'_j, G'_j)\}_{j=1}^m$ where $F'_j \in \cF$, $G'_j \in \cG$, $m \in \Nats$, such that the sets $\{G'_j\}_{j=1}^m$ are pairwise disjoint and $H = \bigcup_{j=1}^m (F'_j \cap G'_j)$.
\end{lemma}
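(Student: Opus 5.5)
We start from the representation given by \cref{lem:canonical_form}: write $H = \bigcup_{i=1}^k (F_i \cap G_i)$ with $F_i \in \cF$ and $G_i \in \cG$. The plan is to refine the $\cG$-components into the atoms of the finite subalgebra of $\cG$ generated by $\{G_1,\hdots,G_k\}$, and to collect the $\cF$-components attached to each atom. Following the conventions of \cref{notation}, for each $S \subseteq \{1,\hdots,k\}$ we consider $G_S \in \cG$. By \cref{particlaim}, these sets are pairwise disjoint and cover $\Omega$. For each $S$ we set
\[
\nonumber F'_S = \bigcup_{i \in S} F_i \in \cF ,
\]
with $F'_\emptyset = \emptyset$. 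This is a finite union of members of $\cF$, so it lies in $\cF$ because $\cF$ is an algebra.

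The key identity to verify is $H = \bigcup_{S} (F'_S \cap G_S)$.

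For the inclusion $\subseteq$, take $x \in F_i \cap G_i$ and let $S = \{j \le k : x \in G_j\}$. Then $x \in G_S$ and $i \in S$, so $x \in F_i \subseteq F'_S$.

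For the inclusion $\supseteq$, take $x \in F'_S \cap G_S$. Then $x \in F_i$ for some $i \in S$. Since $x \in G_S$ and $i \in S$, the membership characterization of $G_S$ from \cref{notation} gives $x \in G_i$. Hence $x \in F_i \cap G_i \subseteq H$.

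Finally, we enumerate the $2^k$ subsets $S$ as $j = 1, \hdots, m$ with $m = 2^k$, and let $(F'_j, G'_j) = (F'_S, G_S)$. The $G'_j$ are pairwise disjoint by \cref{particlaim}. If one insists on nonempty $G'_j$, one may discard the empty atoms, which does not change the union.

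There is no real obstacle here. The only point requiring care is the $\supseteq$ direction, which relies on the fact that membership in $G_S$ forces $x \in G_i$ for every $i \in S$. This is exactly the defining property of the atoms.
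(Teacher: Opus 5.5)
Your proof is correct and follows essentially the paper's approach: refine the $\cG$-components of the \cref{lem:canonical_form} representation into the atoms $G_S$ and attach to each atom the union of the $F_i$ with $i\in S$. This agrees with the paper's $F'_j=\bigcup\{F_i \mid G'_j\subseteq G_i\}$ on nonempty atoms. The only difference is cosmetic: you index by all $2^k$ sets $S$, setting $F'_\emptyset=\emptyset$, rather than enumerating only the nonempty atoms inside $\bigcup_{i\le k} G_i$. Your indexing makes the $\supseteq$ direction slightly more direct.
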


\begin{proof}
By Lemma \ref{lem:canonical_form}, any $H \in \cH$ can be written as $H = \bigcup_{i=1}^k (F_i \cap G_i)$ for some $k \in \Nats$. 

Again invoking \cref{notation},
enumerate $\{G_S \mid S\subseteq \{0,\hdots, k\}, \emptyset \neq G_S \subseteq \bigcup_{i=1}^k G_i\}$ as $\{G'_1, G'_2, \dots, G'_m\}$ for some $m \le 2^k - 1$. 
Thus, $\{G'_1, G'_2, \dots, G'_m\}$ is precisely the set of atoms of the algebra generated by $\{G_j\}_{j \leq k}$ which are subsets of $\bigcup_{j\leq k} G_i$ (see the discussion following \cref{notation}).

By construction, the collection $\{G'_1, G'_2, \dots, G'_m\}$ possesses three key properties:
\begin{enumerate}
\item $\bigcup_{j \leq m} G'_j = \bigcup_{i\leq k} G_i$
    \item The sets $G'_j$ are pairwise disjoint: $G'_j \cap G'_l = \emptyset$ for $j \neq l$.
    \item For any original set $G_i$ and any new set $G'_j$, either $G'_j \subseteq G_i$ or $G'_j \cap G_i = \emptyset$. 
\end{enumerate}
In particular, any original $G_i$ can be perfectly reconstructed as $G_i = \bigcup \{G'_j \mid G'_j \subseteq G_i\}$.

For each $j \in \{1, \dots, m\}$, we define 
\begin{equation}
F'_j = \bigcup \left\{ F_i \mid 1 \le i \le k, \, G'_j \subseteq G_i \right\}. \nonumber
\end{equation}
Clearly, $F'_j \in \cF$.

We now show that $\bigcup_{i=1}^k (F_i \cap G_i) = \bigcup_{j=1}^m (F'_j \cap G'_j)$.
\begin{itemize}
    \item $(\subseteq)$ Let $x \in \bigcup_{i=1}^k (F_i \cap G_i)$. Then there exists some index $i_0$ such that $x \in F_{i_0} \cap G_{i_0}$. Since $\{G'_j\}_{j=1}^m$ partitions $\bigcup_{i=1}^k G_i$, there exists a unique index $j_0$ such that $x \in G'_{j_0}$. Because $x \in G'_{j_0} \cap G_{i_0}$, the partition property dictates that $G'_{j_0} \subseteq G_{i_0}$. By the definition of $F'_{j_0}$, since $G'_{j_0} \subseteq G_{i_0}$, we must have $F_{i_0} \subseteq F'_{j_0}$. Thus, $x \in F'_{j_0} \cap G'_{j_0}$, which implies $x \in \bigcup_{j=1}^m (F'_j \cap G'_j)$.
    \item $(\supseteq)$ Let $x \in \bigcup_{j=1}^m (F'_j \cap G'_j)$. Then there exists some index $j_0$ such that $x \in F'_{j_0} \cap G'_{j_0}$. By definition, $x \in F'_{j_0}$ means there must exist some index $i_0$ such that $x \in F_{i_0}$ and $G'_{j_0} \subseteq G_{i_0}$. Since $x \in G'_{j_0}$ and $G'_{j_0} \subseteq G_{i_0}$, it naturally follows that $x \in G_{i_0}$. Therefore, $x \in F_{i_0} \cap G_{i_0}$, which implies $x \in \bigcup_{i=1}^k (F_i \cap G_i)$.
\end{itemize}
This confirms that the sequence $\{(F'_j, G'_j)\}_{j=1}^m$ satisfies all the required properties, yielding the desired disjoint representation for $H$.
\end{proof}

By transfer, we immediately obtain the following result, which we record for future reference. 

\begin{corollary}\label{cor:canonical_disjoint}
Let $\cM = (\Omega, \cF, \mu)$ and $\cN = (\Omega, \cG, \nu)$ be (not necessarily finite) internal probability spaces and $\cH$ be the internal algebra generated by $\cF \cup \cG$. For every $H \in \cH$, there exists a sequence $\{(F_i, G_i)\}_{i=1}^k$ where $F_i \in \cF$, $G_i \in \cG$, and $k \in \NSE{\Nats}$, such that the sets $\{G_i\}_{i=1}^k$ are pairwise disjoint and $H = \bigcup_{i=1}^k (F_i \cap G_i)$.
\end{corollary}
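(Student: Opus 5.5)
The plan is to obtain \cref{cor:canonical_disjoint} from \cref{lem:canonical_disjoint} by the transfer principle. The one point needing care is how the internal algebra $\cH$ is presented, so that transfer actually applies.

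First, I will formulate \cref{lem:canonical_disjoint} as a single first-order sentence in the standard universe. It says: for every set $\Omega$, all algebras $\cF,\cG$ on $\Omega$, and every $H$ in the algebra $\cH(\cF,\cG)$ generated by $\cF\cup\cG$, there exist $m\in\Nats$ and finite sequences $(F'_j)_{j\le m}$ in $\cF$ and $(G'_j)_{j\le m}$ in $\cG$ such that the $G'_j$ are pairwise disjoint and $H=\bigcup_{j\le m}(F'_j\cap G'_j)$. Every quantifier can be bounded by a suitable level of the superstructure. Here ``finite sequence'' means a function from $\{1,\dots,m\}$, and ``the generated algebra'' is defined as the intersection of all algebras on $\Omega$ containing $\cF\cup\cG$, so the operation $(\cF,\cG)\mapsto\cH(\cF,\cG)$ is a standard function.

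Second, I will apply the transfer principle. The internal probability spaces give internal sets $\Omega$, $\cF$, $\cG$, and $\cF,\cG$ are internal algebras on $\Omega$. The measures $\mu,\nu$ play no role in this step. The ``internal algebra generated by $\cF\cup\cG$'' is, by definition, $\NSE{\cH}(\cF,\cG)$, the value of the transferred generating operation. This is the smallest internal algebra containing $\cF\cup\cG$; it is not in general the externally generated algebra.

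Transferring the sentence then gives, for each $H\in\cH$:
\begin{itemize}
\item some $k\in\NSE{\Nats}$;
\item internal sequences $(F_i)_{i\le k}\subseteq\cF$ and $(G_i)_{i\le k}\subseteq\cG$;
\item such that the $G_i$ are pairwise disjoint and $H=\bigcup_{i\le k}(F_i\cap G_i)$.
\end{itemize}
In this conclusion, the union over a hyperfinite index set is the internal (transferred) union.

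The only real obstacle is bookkeeping. One must make sure that $\cH$ in the statement is read as the internal generated algebra. If one instead takes $\cH$ to be the external algebra generated by $\cF\cup\cG$, the result still follows: that algebra is contained in the internal one, since the internal generated algebra is an algebra containing $\cF\cup\cG$. So the conclusion holds in either reading, and in the external case one could even take $k$ standard by applying \cref{lem:canonical_disjoint} directly.
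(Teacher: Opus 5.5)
Your proposal is correct and follows the paper's own route: the paper also obtains this corollary by transferring \cref{lem:canonical_disjoint}. Your extra bookkeeping, namely reading $\cH$ as the value of the transferred generating operation (the smallest internal algebra containing $\cF\cup\cG$) and noting that the externally generated algebra sits inside it, simply spells out what the paper's one-line justification leaves implicit.
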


\section{Inner approximations from tight covers}\label{sec3}

Given $H \in \cH$, which we have established can be written as a disjoint union $\bigcup_{i \le k} (F_i \cap G_i)$, showing $H \in \bar{\cF}$ amounts to bounding $H$ from inside and outside by elements of $\cF$. 
Our strategy is to cover the disjoint components $G_i \in \cG$ using sets in $\cF$ which are still disjoint, provided one throws away a single fixed Loeb null set. 
The next lemma makes this idea precise by introducing what we call \emph{essentially disjoint covers} and \emph{inner approximations.}
We will show in \cref{thm:main_inclusion} that the existence of an inner approximation  quickly leads to $H \in \bar{\cF}$.

\begin{lemma}\label{lem:necessary_condition}
Suppose $\cM = (\Omega, \cF, \mu)$ and $\cN = (\Omega, \cG, \nu)$ are two Loeb equivalent general internal probability spaces. Fix a hyperfinite sequence of pairwise disjoint sets $\{G_i\}_{i \le k} \subseteq \cG$. If there exists an \emph{essentially disjoint cover}, that is, $\{F^{G_i}_{\textnormal{out}}\}_{i \le k}$ such that $F^{G_i}_{\textnormal{out}} \in \cF$ and $G_i \subseteq F^{G_i}_{\textnormal{out}}$ satisfying:
\begin{equation}
\mu \left( \bigcup_{i \neq j} \left( F^{G_i}_{\textnormal{out}} \cap F_{\textnormal{out}}^{G_j} \right) \right) \approx 0, \nonumber
\end{equation}
then, there exists an \emph{inner approximation}, that is, $\{F_{\textnormal{in}}^{G_i}\}_{i \le k}$ such that $F_{\textnormal{in}}^{G_i} \in \cF$ and $F_{\textnormal{in}}^{G_i} \subseteq G_i$ satisfying:
\begin{equation}
\nu \left( \bigcup_{i \le k} G_i \right) \approx \mu \left( \bigcup_{i \le k} F_{\textnormal{in}}^{G_i} \right). \nonumber
\end{equation}
\end{lemma}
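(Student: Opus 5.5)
The plan is to build the inner approximation by hand from the given essentially disjoint cover. Two things have to be removed from each $F^{G_i}_{\textnormal{out}}$: the overlaps with the other covering sets, and an $\cF$-cover of the complement of $\bigcup_{i\le k} G_i$.

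\textbf{Step 1: cover the complement.} Put $N = \bigcup_{i\neq j} (F^{G_i}_{\textnormal{out}}\cap F^{G_j}_{\textnormal{out}})$. Since $i,j$ range over a hyperfinite index set, $N\in\cF$ by transfer, and $\mu(N)\approx 0$ by hypothesis. Let $G_0=\Omega\setminus\bigcup_{i\le k}G_i$. This is a hyperfinite union inside an internal algebra, so $G_0\in\cG$.

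\textbf{Step 2: approximate $G_0$ from outside with infinitesimal error.} Loeb equivalence gives $G_0\in\Loeb{\cG}=\Loeb{\cF}$ and $\Loeb{\mu}(G_0)=\Loeb{\nu}(G_0)=\ST\nu(G_0)$. By the definition of Loeb measurability, for each standard $n$ there is $C_n\in\cF$ with $C_n\supseteq G_0$ and $\mu(C_n)<\Loeb{\nu}(G_0)+1/n$. Countable saturation (or overspill) then yields a single $F_0\in\cF$ with $F_0\supseteq G_0$ and $\mu(F_0)\approx\nu(G_0)$. This is the only step where more than finite-order approximation is needed, so it is the step I expect to need the most care: one must pass from the standard-$\epsilon$ definition of $\Loeb{\cF}$ to an infinitesimal outer approximation. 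The standard countable-saturation argument handles it.

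\textbf{Step 3: define the inner sets.} For each $i\le k$ set
\[
F^{G_i}_{\textnormal{in}} = F^{G_i}_{\textnormal{out}}\setminus\Bigl(F_0\cup\bigcup_{j\neq i}F^{G_j}_{\textnormal{out}}\Bigr)\in\cF. \nonumber
\]
To see $F^{G_i}_{\textnormal{in}}\subseteq G_i$, take $x$ in it. Then $x\notin F_0\supseteq G_0$, so $x\in G_j$ for some $j\le k$, hence $x\in F^{G_j}_{\textnormal{out}}$. Since $x$ lies in no $F^{G_j}_{\textnormal{out}}$ with $j\ne i$, we must have $j=i$.

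\textbf{Step 4: measure estimates.} Write $U=\bigcup_{i\le k}F^{G_i}_{\textnormal{in}}$.

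For the upper bound, $U$ is disjoint from $F_0$. Hence $\mu(U)\le 1-\mu(F_0)\approx 1-\nu(G_0)=\nu(\bigcup_{i\le k}G_i)$.

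For the lower bound, note that $\Omega=F_0\cup\bigcup_{i\le k}F^{G_i}_{\textnormal{out}}$, because $G_0\subseteq F_0$ and each $G_i\subseteq F^{G_i}_{\textnormal{out}}$. Now take any $x\in F^{G_i}_{\textnormal{out}}\setminus(F_0\cup N)$. It lies in no other $F^{G_j}_{\textnormal{out}}$, so $x\in F^{G_i}_{\textnormal{in}}$. Therefore $\Omega\setminus(F_0\cup N)\subseteq U$, and
\[
\mu(U)\ge 1-\mu(F_0)-\mu(N)\approx 1-\nu(G_0)=\nu\Bigl(\bigcup_{i\le k}G_i\Bigr). \nonumber
\]

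Combining the two bounds gives $\mu(U)\approx\nu(\bigcup_{i\le k}G_i)$, which is exactly the required inner approximation.
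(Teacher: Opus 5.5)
Your proof is correct and is essentially the paper's argument in complemented form: your $F_0$ is $\Omega\setminus F$ for the paper's infinitesimal inner approximation $F\subseteq\bigcup_{i\le k}G_i$ with $\mu(F)\approx\nu(\bigcup_{i\le k}G_i)$. Since $F^{G_i}_{\textnormal{out}}\cap\bigcup_{j\neq i}F^{G_j}_{\textnormal{out}}=F^{G_i}_{\textnormal{out}}\cap N$, your sets $F^{G_i}_{\textnormal{in}}$ coincide exactly with the paper's $F\cap F^{G_i}_{\textnormal{out}}\setminus O$, and your upper bound via $U\cap F_0=\emptyset$ is slightly more explicit than the paper's; note that the half of $\mu(F_0)\approx\nu(G_0)$ that this bound uses, namely $\ST\mu(F_0)\ge\Loeb{\mu}(G_0)=\ST\nu(G_0)$, comes from $F_0\supseteq G_0$ rather than from the saturation step.
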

\begin{proof}
Fix an essentially disjoint cover $\{F^{G_i}_{\textnormal{out}}\}_{i \le k}$ of $\{G_i\}_{i \le k} \subseteq \cG$ as in the theorem.
Let us write $G = \bigcup_{i\leq k} G_i$, 
$O = \bigcup_{i\neq j} (F^{G_i}_{\textnormal{out}} \cap F_{\textnormal{out}}^{G_j})$, and note
$
\mu(O) \approx 0$. 

Using Loeb equivalence, find $F \in \mathcal F$ 
and $\varepsilon  \approx 0$ 
such that $F \subseteq G$ and
\begin{equation} \nonumber
\nu(G) - \varepsilon \leq \mu(F).
\end{equation}
We claim that the following defines the desired inner approximation: Let
\[
\label{e.F_i}
F_{\textnormal{in}}^{G_i} = F \cap F^{G_i}_{\textnormal{out}} \setminus O.
\]
Since for any pair $i,j$ such that $j \neq i$, Eq. 
\eqref{e.F_i} implies
$F_{\textnormal{in}}^{G_i} \cap F_{\textnormal{out}}^{G_j} =\emptyset$ and hence
$F_{\textnormal{in}}^{G_i} \cap G_j =\emptyset$, and since clearly $F_{\textnormal{in}}^{G_i} \subseteq F \subseteq \bigcup_{j\leq k} G_j$, we see 
\begin{equation} \label{e:F_in}
F_{\textnormal{in}}^{G_i} \subseteq G_i.
\end{equation}
It is easy to see
\begin{equation} \nonumber
F \mathbin{\bigg\backslash} \bigcup_{i\leq k} F_{\textnormal{in}}^{G_i}  \subseteq O 
\end{equation}
(Suppose $x$ is an element of the left-hand-side. Since $x \in F \subseteq G$, there is $i \leq k$ such that $x \in F^{G_i}_{\textnormal{out}}$, and using $x \notin F_{\textnormal{in}}^{G_i}$ for this $i$, $x \in O$ follows from \eqref{e:F_in}.) It follows that
\[\nonumber
\mu(F) \leq \mu\left(\bigcup_{i\leq k} F_{\textnormal{in}}^{G_i}\right) + \mu(O).
\]
Putting it all together, we find
\begin{equation} \nonumber
\nu(G) -\varepsilon\leq  \mu(F) \leq \mu \left(\bigcup_{i \leq k} F_{\textnormal{in}}^{G_i} \right)  + \mu(O)\leq \nu(G) + \mu(O)
\end{equation}
showing that $\mu \left(\bigcup_{i \leq k} F_{\textnormal{in}}^{G_i}\right) \approx \nu(G)$.
\end{proof}

The following three lemmas lead to \cref{thm:main_contradiction}, establishing that if $\cM$ and $\cN$ are Loeb equivalent, an essentially disjoint cover
in the sense of \cref{lem:necessary_condition} is always attainable;
in other words, the hypothesis of \cref{lem:necessary_condition} always holds.
To this end, we first establish the existence of what we shall call a \emph{tight cover}.

\begin{lemma}
\label{lem:local tightness}
Suppose that $\mathcal M=(\Omega,\mathcal F,\mu)$ and $\mathcal N=(\Omega,\mathcal G,\nu)$ are two Loeb equivalent internal probability spaces. Let $\{G_i\}_{i\leq k}\subseteq\mathcal G$
be a hyperfinite sequence of pairwise disjoint sets. 
Then there exists a \emph{tight cover}, that is, a family $\{F^{G_i}_{\textnormal{out}}\}_{i\leq k}\subseteq\mathcal{F}$ such that
\begin{itemize}
    \item for every $i\leq k$, we have $G_i\subseteq F^{G_i}_{\textnormal{out}}$;
    \item Every $A \in \cF$ such that $A \subseteq \bigcup_{i\leq k}(F^{G_i}_{\textnormal{out}} \setminus
G_i)$ satisfies  $\mu(A) \approx 0$.
\end{itemize}
\end{lemma}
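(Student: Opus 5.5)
The plan is to reduce the statement, by countable saturation, to a family of internal approximate statements, one for each standard $n$. For $n \in \Nats$, let $\Phi_n$ be the set of all internal hyperfinite sequences $\{F_i\}_{i\le k}\subseteq \cF$ such that $G_i\subseteq F_i$ for all $i\le k$, and such that every $A\in\cF$ with $A\subseteq \bigcup_{i\le k}(F_i\setminus G_i)$ satisfies $\mu(A)<1/n$.

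Each $\Phi_n$ is internal: it is defined by a bounded formula quantifying only over the internal objects $\cF$, $\{G_i\}_{i\le k}$ and $\mu$. The sets are also decreasing in $n$. So once every $\Phi_n$ is shown to be non-empty, countable saturation gives a family in $\bigcap_n \Phi_n$, and that family is the desired tight cover. The rest of the proof therefore fixes a standard $n$ and shows $\Phi_n\neq\emptyset$.

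For fixed $n$ I would choose the $F_i$ individually and extremally. By transfer, for each $i$ the hyperreal $c_i=\inf\{\mu(F) : G_i\subseteq F\in\cF\}$ exists. Fix a positive infinitesimal $\delta$ and internally choose $F_i\supseteq G_i$ in $\cF$ with $\mu(F_i)\le c_i+\delta/k$. Then suppose some $A\in\cF$ with $A\subseteq\bigcup_i(F_i\setminus G_i)$ has $\mu(A)\ge 1/n$; the aim is to derive a contradiction.

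The naive move is to replace each $F_i$ by $F_i\setminus A$, which would drop the total $\sum_i\mu(F_i)$ by a non-infinitesimal amount. This fails, because $A$ may meet $G_j$ inside $F_i$ for $j\neq i$, so $F_j\setminus A$ need no longer cover $G_j$. To repair this, I would pass to the atoms of the internal algebra generated by $\{F_i\}_{i\le k}$, in the sense of \cref{notation} transferred to hyperfinite families. For each $i$ set $A_i = A\cap F_i\setminus\bigcup_{j\neq i}F_j$. Such a set is disjoint from every $G_j$ with $j\neq i$ (as $G_j\subseteq F_j$), and it lies in $F_i\setminus G_i$. Hence $F_i\setminus A_i\supseteq G_i$, and by minimality of $F_i$ we get $\mu(A_i)\le\delta/k$; summing gives $\mu(\bigcup_i A_i)\lesssim\delta$.

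Consequently almost all of $A$ lies in the overlap region $O=\bigcup_{i\neq j}(F_i\cap F_j)$. The main obstacle is to show that $\mu(A\cap O)$ is also small, i.e. that the individually minimal covers are essentially disjoint on $A$. Here Loeb equivalence must enter. Since $A\cap O\in\cF\subseteq\Loeb{\cG}$ and the $G_i$ are disjoint, I would use an inner approximation from $\cF$ of $G=\bigcup_iG_i$: a set $F\subseteq G$, $F\in\cF$, with $\mu(F)\approx\nu(G)$, together with an outer approximation of $G$ from $\cF$ of nearly the same measure. Each $F_i$ can then be replaced by $F_i\cap F'$, where $F'\supseteq G$ is this outer approximation, without losing minimality up to $\delta/k$. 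This forces $A\subseteq F'\setminus G$ up to a small set, and $F'\setminus G$ has Loeb measure $\approx 0$, so every $\cF$-subset of it has infinitesimal $\mu$-measure. That would give $\mu(A)\approx 0$, contradicting $\mu(A)\ge 1/n$.

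The delicate point is to arrange the extremal choice and the intersection with $F'$ simultaneously and internally. I expect this bookkeeping to be where the real work lies.
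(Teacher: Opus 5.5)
Your saturation reduction is fine, and the first half of your argument is correct. With individually near-minimal covers $\mu(F_i)\le c_i+\delta/k$, each piece $A_i=A\cap F_i\setminus\bigcup_{j\neq i}F_j$ lies in $F_i\setminus G_i$ and misses every $G_j$. Hence $\mu(\bigcup_iA_i)\le\delta$ and $\mu(A\setminus O)\le\delta$.

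\textbf{The gap.} The step you call the main obstacle is not bookkeeping; it is the whole content of the lemma, and your sketch of it is circular. After replacing $F_i$ by $F_i\cap F'$ you only know $A\subseteq F'$. So $A\setminus G$ lies in the Loeb null set $F'\setminus G$, and $\bar\mu(A)=\bar\mu(A\cap G)$.

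Your sentence ``this forces $A\subseteq F'\setminus G$ up to a small set'' asserts $\bar\mu(A\cap G)=0$. But every point of $A\cap G_j$ lies in some $F_i\setminus G_i$ with $i\neq j$, and also in $G_j\subseteq F_j$. So $A\cap G\subseteq A\cap O$, which is exactly the quantity you set out to bound.

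Individual minimality only tells you that each single $F_i\setminus G_i$ is Loeb null. The set $A\cap G$ is spread over a hyperfinite union of such sets, and that union need not be Loeb null. Nor can you rerun your $A_i$-trick on $A\cap F_i$, since that set may contain points of $G_i$ that are also covered by another $F_j$.

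The inner approximation $F\subseteq G$ you mention is never actually used. An inner approximation adapted to the partition is, by \cref{lem:necessary_condition} and its easy converse (take $F_i\setminus\bigcup_{j\neq i}F^{G_j}_{\textnormal{in}}$), equivalent to what you want. Indeed, for a cover with $\mu(\bigcup_iF_i)\approx\nu(G)$ one has $\bigcup_i(F_i\setminus G_i)\subseteq O\cup((\bigcup_iF_i)\setminus G)$. Combined with the argument of \cref{thm:main_contradiction}, this shows that tightness is equivalent to $\mu(O)\approx 0$. So the step you defer is precisely the essential disjointness the lemma is supposed to deliver.

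\textbf{The paper's route does not fill this.} The paper takes a near-minimizer of $\mu(\bigcup_iF_i)$ and removes $A$ from every $F_i$, justified by the assertion $A\cap G_i=\emptyset$. That is exactly the naive move you rejected, and your objection is correct. The functional only sees the union, so the constant family $F_i=F'$ (with $F'\supseteq G$ a near-optimal outer approximation) is a near-minimizer. Yet for $k\ge 2$ one has $\bigcup_i(F'\setminus G_i)=F'$, so $A=F'$ violates tightness whenever $\nu(G)\not\approx 0$.

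Union-minimization only shows that $\cF$-subsets of $(\bigcup_iF_i)\setminus G$ are negligible. That is too weak for \cref{thm:main_contradiction}, because $O^S$ generally meets $G$.

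\textbf{What is actually missing.} Equivalently, you need a single internal cover whose partial unions $\bigcup_{i\in S}F_i$ are near-optimal outer approximations of $\bigcup_{i\in S}G_i$ simultaneously for all internal $S\subseteq\{1,\dots,k\}$. Loeb equivalence supplies such approximations only one $S$ at a time, each with its own infinitesimal error. Neither your argument nor the paper's shows how to make one choice of the $F_i$ work for all $S$ at once.
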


\begin{proof}
Let 
\[
\alpha=\inf\left\{\mu\left(\bigcup_{i\leq k}F_i\right) \;\Bigg{\vert}\; \{F_i\}_{i\leq k}\subseteq\mathcal{F}\text{ is internal}, G_i\subseteq F_i\ \text{for all $i\leq k$}\right\}. \nonumber
\]
By the definition of infimum, there exists an internal
hyperfinite sequence $\{F^{G_i}_{\textnormal{out}}\}_{i\leq k}\subseteq \mathcal{F}$ with $G_i\subseteq F^{G_i}_{\textnormal{out}}$ for each $i\leq k$ 
such that $\mu\left(\bigcup_{i\leq k} F^{G_i}_{\textnormal{out}}\right)<\alpha+\eta$ for some $\eta\approx 0$.
We claim that $\{F^{G_i}_{\textnormal{out}}\}_{i\leq k}$ is a tight cover.

We show that for an arbitrary set $A\in\mathcal F$ such that 
$A\subseteq
\bigcup_{i\leq k}(F^{G_i}_{\textnormal{out}} \setminus G_i)$ it must hold that $\mu(A) \leq \eta$. 
So let such a set $A$ be given and suppose towards a contradiction that $\mu(A) > \eta$. 
Define $(F_i^A)_{i\leq k}$ by 
\[\nonumber
F_i^A
=
F^{G_i}_{\textnormal{out}}\setminus A.
\]
Note that we have $A\cap G_i=\emptyset$ for $i\leq  k$. Thus, for $i\leq k$, we have $G_i\subseteq F^{G_i}_{\textnormal{out}}\setminus A=F_i^A$. Hence, 
we have $G_i\subseteq F_i^A$ for all $i\leq k$.
By construction, $\bigcup_{i\leq k}F^{G_i}_{\textnormal{out}} =  \left(\bigcup_{i\leq k}F^A_i\right) \cup A$ and the union is disjoint. Thus,
\[\nonumber
\begin{aligned}
     \mu\left(\bigcup_{i\leq k}F_i^A\right)
    =\mu\left(\bigcup_{i\leq k}   F^{G_i}_{\textnormal{out}}   \right) - \mu(A)
    < \alpha + \eta - \eta = \alpha
\end{aligned}
\]
which contradicts the choice of $\alpha$. 
Thus indeed $\mu(A)\leq \eta$. 
So 
$\{F^{G_i}_{\textnormal{out}}\}_{i\leq k}$ is a tight cover.
\end{proof}

The next lemma on standard measure spaces was proved with the assistance of a large language model.

\begin{lemma}\label{fincombprob}
    Let $(\Omega,\mathcal{F},\mu)$ be a measure space. Given a finite family $\{E_1,E_2,\ldots,E_k\}\subseteq\mathcal{F}$, define $O=\bigcup_{i\neq j}(E_i\cap E_j)$. Then, there exists $S\subseteq \{1,2,\ldots,k\}$ such that 
\[
\mu\left[\left(\bigcup_{i\in S}E_i\right)\cap\left(\bigcup_{i\notin S} E_i\right)\right]\geq \frac{1}{2}\mu(O). \label{E^S}
\]
\end{lemma}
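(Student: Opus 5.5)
The plan is a probabilistic (averaging) argument: choose $S$ uniformly at random among all subsets of $\{1,\hdots,k\}$, equivalently put each index $i$ into $S$ independently with probability $\frac12$. We then show that the expected value of $\mu\left[\left(\bigcup_{i\in S}E_i\right)\cap\left(\bigcup_{i\notin S}E_i\right)\right]$ is at least $\frac12\mu(O)$. Some subset must do at least as well as the average, which gives \eqref{E^S}.

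\emph{Step 1: reduce to the atoms.} Using \cref{notation}, partition $O$ into the atoms $E_L$ with $|L|\geq 2$; indeed $O=\bigcup_{|L|\ge 2}E_L$ exactly. These atoms are measurable and pairwise disjoint by \cref{particlaim}. Fix an atom $E_L$ with $|L|\geq 2$ and a set $S$. If $L\cap S\neq\emptyset$ and $L\setminus S\neq\emptyset$, then every point of $E_L$ lies both in some $E_i$ with $i\in S$ and in some $E_j$ with $j\notin S$. So in that case $E_L\subseteq \left(\bigcup_{i\in S}E_i\right)\cap\left(\bigcup_{i\notin S}E_i\right)$, and additivity gives
\[\nonumber
\mu\left[\left(\bigcup_{i\in S}E_i\right)\cap\left(\bigcup_{i\notin S}E_i\right)\right]\geq \sum_{|L|\geq 2,\ L\cap S\neq\emptyset\neq L\setminus S}\mu(E_L).
\]

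\emph{Step 2: averaging.} Sum this inequality over all $2^k$ subsets $S$ and exchange the finite sums. For a fixed $L$ with $|L|=m\geq 2$, the number of $S$ that split $L$ is $2^k(1-2^{1-m})\geq 2^k\cdot\frac12$. Hence
\[\nonumber
\frac{1}{2^k}\sum_{S}\mu\left[\left(\bigcup_{i\in S}E_i\right)\cap\left(\bigcup_{i\notin S}E_i\right)\right]\ge \frac12\sum_{|L|\geq2}\mu(E_L)=\frac12\mu(O).
\]
An average over finitely many values cannot exceed their maximum, so some $S$ satisfies \eqref{E^S}.

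The only delicate point is infinite measure. If $\mu(O)=\infty$, the same sum shows that some $S$ has infinite left-hand side, so the argument still goes through with the convention on $\infty$. Beyond this, the main step needing care is the counting in Step 2: it must be done uniformly over all $L$ with $|L|\ge 2$, and the worst case $|L|=2$ is exactly where the factor $\frac12$ comes from.
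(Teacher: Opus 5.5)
Your proposal is correct and follows essentially the same route as the paper. Both arguments decompose $O$ into the atoms $E_L$ with $|L|\ge 2$, count that exactly $2^k(1-2^{1-|L|})\ge 2^{k-1}$ subsets split each such $L$, and average over all $2^k$ subsets to get some $S$ achieving at least $\frac12\mu(O)$. Your one-sided inequality in Step 1 suffices where the paper proves the exact identity for $\mu(O^T)$, and your remark on $\mu(O)=\infty$ covers a case the paper leaves implicit.
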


The reader should take note that if $S= \emptyset$ or $S=\{1,\hdots, k\}$,
the set on the left-hand side of \cref{E^S} is empty.
So only in the trivial case that $\mu(O) = 0$ can the lemma produce $S$ of this form.

\begin{proof}
    Let \(I=\{1,\ldots,k\}\) be the index set. 
Once more, invoke \cref{notation} and recall that for $S \subseteq \{1, \hdots, k\}$,
\[\nonumber
E_S =  \{ x \in \Omega \mid (\forall i \leq k)\;  x \in E_i \iff i \in S\}.
\]

\begin{claim}\label{Orepresent}
$O=\bigcup_{\substack{L\subseteq I\\ |L|\ge2}}E_L$.  
\end{claim}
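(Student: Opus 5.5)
We prove the two inclusions separately, working pointwise and using the characterization of $E_L$ from \cref{notation}: $x\in E_L$ if and only if $L=\{i\in I \mid x\in E_i\}$. For each $x\in\Omega$ set $L_x=\{i\in I \mid x\in E_i\}$. By \cref{particlaim}, applied to the family $\{E_1,\hdots,E_k\}$, the sets $E_L$ with $L\subseteq I$ partition $\Omega$. Hence $x\in E_{L_x}$, and $L_x$ is the unique index set $L$ with $x\in E_L$. The claim then reduces to the assertion that $x\in O$ if and only if $|L_x|\geq 2$.

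For $(\subseteq)$, let $x\in O$. Then there are indices $i\neq j$ with $x\in E_i\cap E_j$. So $i,j\in L_x$, which gives $|L_x|\ge 2$. Since $x\in E_{L_x}$, it follows that $x$ lies in $\bigcup_{|L|\ge 2}E_L$.

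For $(\supseteq)$, let $x\in E_L$ for some $L\subseteq I$ with $|L|\ge2$. Choose distinct $i,j\in L$. By definition of $E_L$ we have $x\in\bigcap_{l\in L}E_l$, so in particular $x\in E_i\cap E_j$ with $i\neq j$. Thus $x\in O$.

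No step is a genuine obstacle; the argument is direct unwinding of definitions. The only point needing care is bookkeeping. The sets $E_L$ with $|L|\le 1$, namely $E_\emptyset$ (the points in no $E_i$) and $E_{\{i\}}$ (the points only in $E_i$), must be excluded, and the argument does exclude them: a point of $E_L$ with $|L|\le1$ belongs to at most one $E_i$, so it cannot lie in any $E_i\cap E_j$ with $i\ne j$. Beyond the claim itself, I expect it to be used afterwards through disjointness. Since the $E_L$ partition $\Omega$, the representation gives $\mu(O)=\sum_{|L|\ge2}\mu(E_L)$. The natural route to \cref{fincombprob} is then a random choice of $S$ (each $i$ put in $S$ independently with probability $1/2$). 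For $|L|\ge2$, the set $E_L$ lies in $\left(\bigcup_{i\in S}E_i\right)\cap\left(\bigcup_{i\notin S}E_i\right)$ exactly when $L$ meets both $S$ and its complement, which happens with probability $1-2^{1-|L|}\ge 1/2$. So the expected measure of that intersection is at least $\frac12\mu(O)$, and some $S$ attains at least this value.
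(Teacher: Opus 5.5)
Your proof is correct and essentially matches the paper's: both inclusions are checked pointwise, using the index set $\{i\in I \mid x\in E_i\}$ to place a point of $O$ in some $E_L$ with $|L|\ge 2$, and two distinct indices of $L$ for the reverse inclusion. Invoking \cref{particlaim} to get $x\in E_{L_x}$ is harmless but unnecessary, since it follows directly from the characterization of $E_L$; your sketch of the later averaging over $S$ also matches the paper's counting argument.
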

\begin{proof}
Let $x\in \bigcup_{\substack{L\subseteq I\\ |L|\ge2}}E_L$ be given. Then there exists some $L_1$ with $|L_1|\geq 2$ such that $x\in E_{L_1}$. Since $|L_1|\geq 2$, the set $L_1$ contains two distinct elements $i_1, j_1$. Then we have $x\in E_{i_1}\cap E_{j_1}\subseteq O$. So we have $\bigcup_{\substack{L\subseteq I\\ |L|\ge2}}E_L\subseteq O$.

Now let $x\in O$ be given. Then there exist two distinct $i_2, j_2\in I$ such that $x\in E_{i_2}\cap E_{j_2}$. Pick $L_2\subset I$ such that $i\in L_2$ if and only if $x\in E_i$. Note that $|L_2|\geq 2$ since $\{i_2, j_2\}\subset L_2$. We also have $x\in E_{L_2}$. Hence, we have $O\subseteq \bigcup_{\substack{L\subseteq I\\ |L|\ge2}}E_L$.
\end{proof}

For any given set $T\subseteq I$, let $O^T=(\bigcup_{i\in T}E_i)\cap(\bigcup_{i\notin T} E_i)$. For $T=\emptyset$ or $T=I$, this means $O^T=\emptyset$.

\begin{claim}\label{finprobclaim}
For every $L,T\subseteq I$,
\[\nonumber
E_L\cap O^T
=
\begin{cases}
E_L, & \text{if } L\cap T\neq\emptyset
        \text{ and } L\setminus T\neq\emptyset,\\[4pt]
\emptyset, & \text{otherwise}.
\end{cases}
\]
\end{claim}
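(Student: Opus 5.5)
The plan is to work with the pointwise characterization from \cref{notation}: $x\in E_L$ exactly when $\{i\in I : x\in E_i\}=L$. This reduces the claim to a purely combinatorial statement about the membership pattern $L$. Since every point of $E_L$ has the same pattern $L$, the question ``is $x\in O^T$?'' has the same answer for all $x\in E_L$. Hence $E_L\cap O^T$ is either all of $E_L$ or empty, and it only remains to identify when each case occurs.

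First, fix $x\in E_L$. Then $x\in\bigcup_{i\in T}E_i$ if and only if there is some $i\in T$ with $x\in E_i$, that is, some $i\in T\cap L$. So this happens if and only if $L\cap T\neq\emptyset$.

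Similarly, $x\in\bigcup_{i\notin T}E_i$ if and only if there is some $i\in I\setminus T$ with $i\in L$. This happens if and only if $L\setminus T\neq\emptyset$.

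Combining the two, $x\in O^T$ if and only if both $L\cap T\neq\emptyset$ and $L\setminus T\neq\emptyset$. This condition does not depend on the particular $x\in E_L$. So if both conditions hold, every point of $E_L$ lies in $O^T$ and $E_L\cap O^T=E_L$; otherwise no point does and $E_L\cap O^T=\emptyset$.

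The argument also covers the degenerate cases. If $T=\emptyset$ or $T=I$, one of the two conditions always fails, which matches $O^T=\emptyset$. If $E_L$ is empty, both sides of the claimed identity are empty anyway.

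There is no real obstacle here. The only care needed is to use the ``if and only if'' form of membership in $E_L$, including the non-membership part ($x\notin E_i$ for $i\notin L$), so that the two directions of each equivalence are justified.
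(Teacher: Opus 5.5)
Your proof is correct and follows essentially the same route as the paper: fix $x\in E_L$, use the pointwise description of $E_L$ to see that membership of $x$ in each of the two unions depends only on whether $L\cap T$ and $L\setminus T$ are nonempty, and conclude that $E_L\cap O^T$ is either all of $E_L$ or empty. Your version is slightly cleaner than the paper's: you need no separate case for $T\in\{\emptyset,I\}$, and you explicitly handle empty $E_L$, where the paper's ``$E_L\subseteq O^T$ if and only if'' phrasing is literally false.
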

\begin{proof}
Suppose $T=\emptyset$ or $T=I$.
For any $L\subset I$, 
we have either $L\cap T=\emptyset$ or $L\setminus T=\emptyset$, and $E_{L}\cap O^T=\emptyset$.  
We now focus on the case in which $T$ is a proper non-empty subset of $I$. Fix $x\in E_L$. We have
\[\nonumber
\begin{aligned}
x\in O^T
&\Longleftrightarrow
x\in \bigcup_{i\in T}E_i
\text{ and }
x\in \bigcup_{i\in I\setminus T}E_i\\
&\Longleftrightarrow
\text{there exists }i_0\in T\text{ such that }x\in E_{i_0}
\text{ and there exists }j_0\in I\setminus T\text{ such that }x\in E_{j_0}\\
&\Longleftrightarrow
\text{Since }x\in E_{L}, \text{ we have }
i_0 \in L
\text{ and }j_0\in L\\
&\Longleftrightarrow
L\cap T\neq\emptyset
\text{ and }
L\setminus T\neq\emptyset.
\end{aligned}
\]
Since our choice of $x$ is arbitrary, we have $E_L \subseteq O^T$ if and only if $L\cap T\neq\emptyset$ and $L\setminus T\neq\emptyset$. By the same argument, we have $E_L \cap O^T = \emptyset$ if and only if $L\cap T=\emptyset$ or $L\setminus T=\emptyset$.
\end{proof}

By \cref{particlaim},
the family $\{E_L:L\subseteq I\}$ forms a partition of $\Omega$. For a given $T\subseteq I$, by \cref{finprobclaim}, we have 
\[
O^T=O^T\cap(\bigcup_{L\subseteq I}E_L)=\bigcup_{L\subseteq I}(O^T\cap E_L)=\bigcup_{\substack{L\subseteq I\\ L\cap T \neq \emptyset\\L\setminus T\neq \emptyset}}E_L.\nonumber
\]
Since the family $\{E_L:L\subseteq I\}$ is mutually disjoint, we have 
\[
\mu(O^T) = \sum_{\substack{L \subseteq I \\ L \cap T \neq \emptyset \\ L \setminus T \neq \emptyset}} \mu(E_L).\nonumber
\]

For any fixed $L\subset I$, let $N_{L}$ be the number of sets $T\subset I$ such that $T\cap L\neq\emptyset$ and $L\setminus T\neq\emptyset$. Then we have\footnote{$\PowerSet(I)$ is the power set of $I$.}
\[
N_{L}&=|\PowerSet(I)\setminus \left(\{T\subset I: T\subset I\setminus L\}\cup \{T\subset I: L\subset T\}\right)| \nonumber \\
&=2^k-2^{k-|L|}-2^{k-|L|}=2^{k}(1-2^{1-|L|}). \nonumber
\]
We now calculate the average value of $\mu(O^T)$ where $T$ runs over all $T\subseteq I$.
\[
\frac{1}{2^k}\sum_{\substack{T \subseteq I }} \mu(O^T)
=\frac{1}{2^k}\sum_{\substack{T \subseteq I }}\sum_{\substack{L \subseteq I \\ L \cap T \neq \emptyset \\ L \setminus T \neq \emptyset}} \mu(E_L)= \frac{1}{2^k}\sum_{L \subseteq I}\sum_{\substack{T \subseteq I \\ L \cap T \neq \emptyset \\ L \setminus T \neq \emptyset}}  \mu(E_L)=\frac{1}{2^k}\sum_{L \subseteq I} N_{L}\,\mu(E_{L}). \nonumber
\]
Note that, if $|L|\leq 1$, then $L \cap T \neq  \emptyset$ and $L \setminus T \neq \emptyset$ cannot hold simultaneously and $N_L = 0$. On the other hand, for $L\subset I$ with $|L|\geq 2$, we have $N_{L}>0$. Hence, we have $N_{L}>0$ if and only if $|L|\geq 2$. 
Therefore
\[
\frac{1}{2^k}\sum_{\substack{T \subseteq I }} \mu(O^T)
&=\frac{1}{2^k}\sum_{\substack{L \subseteq I \\ |L|\geq 2}}2^k\bigl(1 - 2^{1-|L|}\bigr)\cdot \mu(E_L) \nonumber \\
&=\sum_{\substack{L \subseteq I \\ |L|\geq 2}}\bigl(1 - 2^{1-|L|}\bigr)\cdot \mu(E_L).\nonumber
\]
As $1 - 2^{1-|L|}\geq \frac{1}{2}$ for $|L|\geq 2$ and $O=\bigcup_{\substack{L\subseteq I\\ |L|\ge2}}E_L$ by \cref{Orepresent}, we have
\[\nonumber
\frac{1}{2^k}\sum_{\substack{T \subseteq I }} \mu(O^T)\geq\frac{1}{2}\mu(O).
\]
By this lower bound on the average of $\mu(O^T)$, clearly there must exist $S\subseteq I$ such that $\mu(O^S)\geq \frac{1}{2}\mu(O)$. 
\end{proof}

The transfer of \cref{fincombprob} leads to the following result:

\begin{corollary}\label{hyfinkcor}
 Let $(\Omega,\mathcal{F},\mu)$ be an internal probability space. Let $\{E_i\}_{i\le k} \subseteq \mathcal{F}$ be a hyperfinite family of internal sets. Let $O=\bigcup_{i\neq j}(E_i\cap E_j)$.
 Then there exists an internal set $S \subseteq \{1,\dots,k\}$ such that
\[
\mu\left( \bigcup_{i\in S} E_i \cap \bigcup_{i\notin S} E_i \right) \ge \frac{1}{2}\mu(O). \nonumber
\]
\end{corollary}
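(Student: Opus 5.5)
The plan is to obtain \cref{hyfinkcor} by transfer of \cref{fincombprob}, so the work consists in writing \cref{fincombprob} as a single first-order sentence over the standard universe and checking that its $\NSE{}$-transform is exactly the corollary. It is convenient to restrict \cref{fincombprob} to probability spaces, which is a special case of the measure-space statement. The sentence then reads: for every set $\Omega$, every algebra $\cF$ of subsets of $\Omega$, every finitely additive probability $\mu\colon \cF\to[0,1]$, every $k\in\Nats$, and every function $E\colon\{1,\dots,k\}\to\cF$, there exists $S\subseteq\{1,\dots,k\}$ with
\[\nonumber
\mu\Bigl(\bigcup_{i\in S}E(i)\cap\bigcup_{i\notin S}E(i)\Bigr)\ge \tfrac12\,\mu\Bigl(\bigcup_{i\neq j}E(i)\cap E(j)\Bigr).
\]
The whole statement is bounded by a sufficiently high level of the superstructure, so it is eligible for transfer.

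First I will check that the finite combinatorial proof of \cref{fincombprob} really uses only finite additivity and the partition $\{E_L\}$. This is what justifies stating the sentence for finitely additive probabilities on algebras rather than for $\sigma$-additive measures. The check is needed because internal probability spaces are only internally finitely additive: $\cF$ is an internal algebra and $\mu$ is internally finitely additive. Transferring the $\sigma$-additive version would not apply to them.

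Second, I will apply transfer. The transferred sentence quantifies over internal $\Omega$, internal algebras $\cF$, internal finitely additive $\mu$ with values in $\NSE{[0,1]}$, $k\in\NSE{\Nats}$, and internal maps $E\colon\{1,\dots,k\}\to\cF$. A hyperfinite family $\{E_i\}_{i\le k}$, as in the corollary, is precisely such an internal map. The existential quantifier over $S\subseteq\{1,\dots,k\}$ becomes an existential over $\NSE{\PowerSet}(\{1,\dots,k\})$, that is, over internal subsets, which gives the internality of $S$. The unions $\bigcup_{i\in S}E_i$, $\bigcup_{i\notin S}E_i$ and $O$ are then hyperfinite internal unions, so they lie in $\cF$ by transfer of closure under finite unions. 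This confirms that all the displayed measures are defined.

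The only step needing any care is the first one: confirming that the statement is formulated for finitely additive algebras, so that it matches internal probability spaces. Since the counting argument with $N_L$ uses only finitely many disjoint atoms $E_L$ and finite additivity, I expect this to go through without difficulty.
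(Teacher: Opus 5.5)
Your proposal is correct and follows the paper, which obtains the corollary simply by transferring \cref{fincombprob}. Restating that lemma for finitely additive probabilities on algebras is a worthwhile precision the paper leaves implicit: its counting proof only uses finite additivity over the finitely many atoms $E_L$, and internal probability spaces are only internally finitely additive.
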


We now prove the following key result: Every hyperfinite family of pairwise disjoint measurable sets from one internal probability space can be covered by a hyperfinite family of essentially disjoint measurable sets from a Loeb equivalent internal probability space.   
Moreover, as we shall presently see, any tight cover is  already an essentially disjoint cover.

\begin{theorem}\label{thm:main_contradiction}
Let $\cM=(\Omega,\mathcal F,\mu), \cN=(\Omega,\mathcal G,\nu)$
be Loeb equivalent internal probability spaces. For every hyperfinite sequence of pairwise disjoint sets $\{G_i\}_{i\le k}\subseteq\mathcal G$ with $k\in \NSE{\Nats}$
there exists an essentially disjoint cover, that is, a family $\{F^{G_i}_{\textnormal{out}}\}_{i \le k} \subseteq\mathcal F$ of sets such that $G_i\subseteq F^{G_i}_{\textnormal{out}}$ for all $i\leq k$ and $\mu\left(\bigcup_{i\ne j}( F^{G_i}_{\textnormal{out}}\cap  F_{\textnormal{out}}^{G_j})\right)\approx0$.
\end{theorem}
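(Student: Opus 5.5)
We take the tight cover $\{F^{G_i}_{\textnormal{out}}\}_{i\le k}$ provided by \cref{lem:local tightness} and show that it is already an essentially disjoint cover. Write $E_i = F^{G_i}_{\textnormal{out}}$ and $O=\bigcup_{i\neq j}(E_i\cap E_j)$; this set is internal and lies in $\cF$. We argue by contradiction. Suppose $\mu(O)\not\approx 0$, so $\mu(O) > r$ for some standard $r>0$. By \cref{hyfinkcor} there is an internal $S\subseteq\{1,\dots,k\}$ with
\[\nonumber
\mu\Bigl(\bigcup_{i\in S}E_i \cap \bigcup_{i\notin S}E_i\Bigr)\ge \tfrac12\mu(O) > \tfrac r2 .
\]
The point of this reduction is that we now only have to deal with the intersection of two sets, $E^S=\bigcup_{i\in S}E_i$ and $E^{-S}=\bigcup_{i\notin S}E_i$, rather than hyperfinitely many.

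Next we use Loeb equivalence on the two sets $G^S=\bigcup_{i\in S}G_i$ and $G^{-S}=\bigcup_{i\notin S}G_i$. Both belong to $\cG$ by internal (hyperfinite) unions, and they are disjoint because the $G_i$ are pairwise disjoint. Since $\bar\cF=\bar\cG$ and $\bar\mu=\bar\nu$, $G^S$ is Loeb measurable for $\mu$. Hence there is $C^S\in\cF$ with $C^S\subseteq G^S$ and $\mu(C^S)\approx\nu(G^S)$, and likewise $C^{-S}\subseteq G^{-S}$. We also have $G^S\subseteq E^S$, so $E^S\setminus G^S$ is Loeb null. Concretely, $\mu(E^S\setminus C^S)\approx \bar\mu(E^S)-\bar\mu(G^S)$, and this is infinitesimal provided we know $\bar\mu(E^S)\approx \nu(G^S)$.

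This last fact is where tightness enters. The set $A=E^S\setminus C^S$ belongs to $\cF$, but it is not contained in $\bigcup_i(E_i\setminus G_i)$, since points of $G_j$ with $j\notin S$ may lie in $E^S$. So we refine the argument. Consider
\[\nonumber
A = (E^S\cap E^{-S})\setminus (C^S\cup C^{-S}) .
\]
It is in $\cF$. Take $x\in A$ and suppose $x\in G_j$ for some $j$. If $j\in S$, then $x\in G^S$. Then $x\notin G^{-S}$, but $x\in E^{-S}$, so $x\in E_i\setminus G_i$ for some $i\notin S$. The case $j\notin S$ is symmetric, and if $x$ lies in no $G_j$ the conclusion is immediate. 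Thus $A\subseteq\bigcup_i(E_i\setminus G_i)$, and tightness gives $\mu(A)\approx0$.

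It remains to show that $\mu\bigl((E^S\cap E^{-S})\cap(C^S\cup C^{-S})\bigr)\approx 0$; this is the main obstacle. Take $C^S\cap E^{-S}$. Every point of it lies in $G^S$ and in some $E_i$ with $i\notin S$, hence in $E_i\setminus G_i$. So $C^S\cap E^{-S}$ is an $\cF$-set inside $\bigcup_i(E_i\setminus G_i)$, and tightness makes it infinitesimal. The same holds for $C^{-S}\cap E^S$. Combining the three pieces, $\mu(E^S\cap E^{-S})\approx0$, which contradicts the bound $\mu(E^S\cap E^{-S})> r/2$.

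Looking back, this argument does not even need the approximations $C^S,C^{-S}$ to be close in measure: any $\cF$-sets work for the splitting. So the decomposition could simplify to using tightness directly on $E^S\cap E^{-S}$, noting that every point of it lies in some $E_i\setminus G_i$. Indeed, if $x\in G_j$ with $j\in S$, then $x\in E_i$ for some $i\notin S$ and $x\notin G_i$; similarly if $j\notin S$; and if $x$ lies in no $G_j$, take any $i$ with $x\in E_i$. Hence tightness applied to $E^S\cap E^{-S}\in\cF$ immediately gives infinitesimal measure. This is the streamlined plan I would write up, with \cref{hyfinkcor} supplying the needed reduction from $O$ to a single intersection of two unions.
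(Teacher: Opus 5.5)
Your streamlined argument is correct and is essentially the paper's proof. You take the tight cover from \cref{lem:local tightness}, use \cref{hyfinkcor} to reduce to the single set $E^S\cap E^{-S}$, show it lies inside $\bigcup_{i\le k}(E_i\setminus G_i)$ via the disjointness of $G^S$ and $G^{-S}$, and conclude by tightness; the contradiction framing and the detour through $C^S, C^{-S}$ are unnecessary, as you yourself note.
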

\begin{proof}
By \cref{lem:local tightness}, there exists a tight cover, that is, an internal
hyperfinite family  $\{F^{G_i}_{\textnormal{out}}\}_{i\leq k}\subseteq\mathcal{F}$ such that
\begin{itemize}
    \item for every $i\leq k$, we have $G_i\subseteq F^{G_i}_{\textnormal{out}}$;
    \item Every $A \in \cF$ such that $A \subseteq \bigcup_{i\leq k}(F^{G_i}_{\textnormal{out}} \setminus
G_i)$ satisfies  $\mu(A) \approx 0$.
\end{itemize}
Define $O=\bigcup_{i\neq j}(F^{G_i}_{\textnormal{out}}\cap F_{\textnormal{out}}^{G_j})$. By \Cref{hyfinkcor}, there exists an internal set \(S\subseteq \{1,\ldots,k\}\) such
that $\mu(O^S)\geq\frac12\mu(O),$
where 
\[\nonumber
O^S
=
\left(\bigcup_{i\in S}F^{G_i}_{\textnormal{out}}\right)
\cap
\left(\bigcup_{i\notin S}F^{G_i}_{\textnormal{out}}\right).\]
Let $F^S=\bigcup_{i\in S}F^{G_i}_{\textnormal{out}}$, $F^{-S}=\bigcup_{i\notin S}F^{G_i}_{\textnormal{out}}$, $G^S=\bigcup_{i\in S}G_i$ and $G^{-S}=\bigcup_{i\notin S}G_i$. Thus $O^S=F^S\cap F^{-S}.$ 
Observe
\[\label{S}
O^S \setminus G^S \subseteq F^S \setminus G^S
= 
\left(\bigcup_{i\in S} F^{G_i}_{\textnormal{out}} \right)
\mathbin{\bigg{\backslash}} 
\left(\bigcup_{i\in S} G_i \right)
\subseteq 
\bigcup_{i\in S} (F^{G_i}_{\textnormal{out}} \setminus G_i)
\]
Since $O^S \subseteq F^{-S}$, a symmetric argument shows
\[\label{-S}
O^S \setminus G^{-S} \subseteq \bigcup_{i\notin S} (F^{G_i}_{\textnormal{out}} \setminus G_i).
\]
As $G^S$ and $G^{-S}$ are disjoint, we have $(G^S)^\complement \cup (G^{-S})^\complement = \Omega$ and
\[
O^S = (O^S \setminus G^S) \cup (O^S \setminus G^{-S}). \nonumber
\]
Together with \cref{S} and \cref{-S} this implies
\[\nonumber
O^S \subseteq \bigcup_{i\leq k} (F^{G_i}_{\textnormal{out}} \setminus G_i)
\]
Since  
$\{F^{G_i}_{\textnormal{out}}\}_{i\leq k}$
is tight cover and $O^S \in \cF$ we infer $\mu(O^S) \approx 0$.
Since $\mu(O_{S})\geq \frac{1}{2}\mu(O)$, we have $\mu\left(\bigcup_{i\ne j}( F^{G_i}_{\textnormal{out}}\cap  F_{\textnormal{out}}^{G_j})\right)=\mu(O)\approx 0$. 
So $\{F^{G_i}_{\textnormal{out}}\}_{i\leq k}\subseteq\mathcal{F}$ is the desired sequence.
\end{proof}

Putting together \Cref{thm:main_contradiction} and \Cref{lem:necessary_condition}, we have the following result:

\begin{corollary}\label{cor:inner_approx}
Let $\cM = (\Omega, \cF, \mu)$ and $\cN = (\Omega, \cG, \nu)$ be two Loeb equivalent general internal probability spaces. For any fixed hyperfinite sequence of pairwise disjoint sets $\{G_i\}_{i \le k} \subseteq \cG$, there exists an inner approximation $\{F_{\textnormal{in}}^{G_i}\}_{i \le k}$ such that $F_{\textnormal{in}}^{G_i} \in \cF$ and $F_{\textnormal{in}}^{G_i} \subseteq G_i$ satisfying
\begin{equation}
\nu \left( \bigcup_{i \le k} G_i \right) \approx \mu \left( \bigcup_{i \le k} F_{\textnormal{in}}^{G_i} \right).\nonumber
\end{equation}
\end{corollary}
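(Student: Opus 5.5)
The corollary follows by chaining the two results just established. Fix Loeb equivalent internal probability spaces $\cM$ and $\cN$, and a hyperfinite sequence of pairwise disjoint sets $\{G_i\}_{i \le k} \subseteq \cG$ with $k \in \NSE{\Nats}$. Apply \cref{thm:main_contradiction} to this sequence. It yields an internal family $\{F^{G_i}_{\textnormal{out}}\}_{i\le k}\subseteq \cF$ with two properties: $G_i \subseteq F^{G_i}_{\textnormal{out}}$ for every $i \le k$, and
\[
\mu\left(\bigcup_{i\neq j}\left(F^{G_i}_{\textnormal{out}}\cap F^{G_j}_{\textnormal{out}}\right)\right)\approx 0 . \nonumber
\]
This is exactly an essentially disjoint cover in the sense of the hypothesis of \cref{lem:necessary_condition}.

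We then feed this cover into \cref{lem:necessary_condition}, with the same spaces $\cM$, $\cN$ and the same sequence $\{G_i\}_{i\le k}$. Its conclusion is precisely the desired inner approximation: a family $\{F^{G_i}_{\textnormal{in}}\}_{i\le k}$ with $F^{G_i}_{\textnormal{in}}\in\cF$ and $F^{G_i}_{\textnormal{in}}\subseteq G_i$, such that
\[
\nu\left(\bigcup_{i\le k} G_i\right)\approx \mu\left(\bigcup_{i\le k}F^{G_i}_{\textnormal{in}}\right). \nonumber
\]

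The only point to check is that the hypotheses match exactly. Both results require Loeb equivalence of $\cM$ and $\cN$ and a hyperfinite pairwise disjoint sequence in $\cG$. Both also require the cover to be internal, since \cref{lem:necessary_condition} forms the internal sets $O$ and $F\cap F^{G_i}_{\textnormal{out}}\setminus O$. The internality of the cover is guaranteed by the construction in \cref{lem:local tightness}, which \cref{thm:main_contradiction} invokes. All the substantive work, namely tightness combined with \cref{hyfinkcor}, is already contained in \cref{thm:main_contradiction}, so this corollary poses no real obstacle.
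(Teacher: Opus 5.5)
Your proposal is correct and is exactly the paper's argument: the corollary follows by feeding the essentially disjoint cover from \cref{thm:main_contradiction} into \cref{lem:necessary_condition}, whose hypotheses match as you say. One caution on your remark that tightness does the real work: the step ``$A\cap G_i=\emptyset$'' in the proof of \cref{lem:local tightness} does not follow from $A\subseteq\bigcup_{i\le k}(F^{G_i}_{\textnormal{out}}\setminus G_i)$ (take $\cF=\cG$, $\mu=\nu$, $k=2$, $G_1\cup G_2=\Omega$, $\mu(G_1)=\mu(G_2)=\tfrac12$; then the minimizer $F^{G_1}_{\textnormal{out}}=F^{G_2}_{\textnormal{out}}=\Omega$ is neither tight nor essentially disjoint), so the statement of \cref{thm:main_contradiction} you cite is better secured by a different argument: overspill gives, for all internal $S\subseteq\{1,\dots,k\}$ at once and some infinite $N$, internal $A_S\subseteq\bigcup_{i\in S}G_i\subseteq C_S$ in $\cF$ with $\mu(C_S\setminus A_S)<1/N$, after which the cover $E_i=\bigcap_{S\ni i}C_S\setminus\bigcup_{S\not\ni i}A_S$ satisfies $2^{k-1}\mu\bigl(\bigcup_{i\neq j}(E_i\cap E_j)\bigr)\le\sum_S\mu(C_S\setminus A_S)<2^k/N$.
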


With \cref{cor:inner_approx}, we now prove the main result of this section. 

\begin{theorem}\label{thm:main_inclusion}
Suppose $\cM = (\Omega, \cF, \mu)$ and $\cN = (\Omega, \cG, \nu)$ are two Loeb equivalent internal probability spaces and $\cH$ is the internal algebra generated by $\cF \cup \cG$. Then $\cH \subseteq \bar{\cF}$.
\end{theorem}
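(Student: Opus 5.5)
Fix $H\in\cH$. By \cref{cor:canonical_disjoint}, write $H=\bigcup_{i\le k}(F_i\cap G_i)$ with $F_i\in\cF$, $G_i\in\cG$, the $G_i$ pairwise disjoint, and $k\in\NSE{\Nats}$. To show $H\in\Loeb{\cF}$ it suffices to find internal sets $A\subseteq H\subseteq C$ with $A,C\in\cF$ and $\mu(C\setminus A)\approx 0$. The plan is to construct $C$ from the essentially disjoint cover of \cref{thm:main_contradiction} and $A$ from the inner approximation of \cref{cor:inner_approx}, both applied to the same family $\{G_i\}_{i\le k}$.

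For the inner set, take the inner approximation $\{F^{G_i}_{\textnormal{in}}\}_{i\le k}$ from \cref{cor:inner_approx} and put $A=\bigcup_{i\le k}(F_i\cap F^{G_i}_{\textnormal{in}})$. This is a hyperfinite internal union of members of $\cF$, so $A\in\cF$, and $A\subseteq H$ because $F^{G_i}_{\textnormal{in}}\subseteq G_i$. Set $G=\bigcup_{i\le k}G_i$ and $F_{\textnormal{in}}=\bigcup_{i\le k}F^{G_i}_{\textnormal{in}}\subseteq G$. Then $\nu(G)\approx\mu(F_{\textnormal{in}})$. Since $\cM,\cN$ are Loeb equivalent, $G\in\Loeb{\cF}$ with $\Loeb{\mu}(G)=\ST\nu(G)$, so $\Loeb{\mu}(G\setminus F_{\textnormal{in}})=0$.

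For the outer set, take the essentially disjoint cover $\{F^{G_i}_{\textnormal{out}}\}$ from \cref{thm:main_contradiction}, let $O$ be its overlap set with $\mu(O)\approx0$, and choose $D\in\cF$ with $G\subseteq D$ and $\mu(D)\approx\nu(G)$, again by Loeb equivalence. Define
\[\nonumber
C=\Bigl(D\cap\bigcup_{i\le k}\bigl(F_i\cap F^{G_i}_{\textnormal{out}}\bigr)\Bigr)\cup O .
\]
Then $C\in\cF$ and $H\subseteq C$, since each point of $F_i\cap G_i$ lies in $D$ and in $F_i\cap F^{G_i}_{\textnormal{out}}$.

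It remains to check $\mu(C\setminus A)\approx0$, which I expect to be the main point. Take $x\in C\setminus(O\cup A)$. Then $x\in D$ and $x\in F_i\cap F^{G_i}_{\textnormal{out}}$ for some $i$. Since $x\notin O$, this $i$ is the unique index with $x\in F^{G_j}_{\textnormal{out}}$. If $x$ were in $F_{\textnormal{in}}$, say $x\in F^{G_j}_{\textnormal{in}}\subseteq G_j\subseteq F^{G_j}_{\textnormal{out}}$, uniqueness would force $j=i$, giving $x\in F_i\cap F^{G_i}_{\textnormal{in}}\subseteq A$, a contradiction. Hence $C\setminus A\subseteq O\cup(D\setminus F_{\textnormal{in}})$. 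It follows that $\mu(C\setminus A)\le\mu(O)+\mu(D)-\mu(F_{\textnormal{in}})\approx 0+\nu(G)-\nu(G)=0$. Therefore $H\in\Loeb{\cF}$.
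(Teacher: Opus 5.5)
Your proof is correct, and your inner set $A$ is exactly the paper's $H'$. The two arguments differ only in how the outer side is handled.

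The paper builds no outer set. It observes $H\setminus H'\subseteq G\setminus F_{\textnormal{in}}$, using only that the $G_i$ are pairwise disjoint and $F^{G_i}_{\textnormal{in}}\subseteq G_i$. It notes that this set is $\bar\mu$-null because $G\in\cG\subseteq\bar\cG=\bar\cF$, and concludes by completeness of the Loeb measure.

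You instead verify the sup/inf definition of $\bar\cF$ directly with an explicit internal $C\in\cF$. This is more self-contained, since it does not lean on completeness of $\bar\mu$ or on $\bar\cF$ being closed under unions. It is, however, longer, and your second appeal to \cref{thm:main_contradiction} is not needed. The simpler choice $C=A\cup(D\setminus F_{\textnormal{in}})$ already works. Disjointness of the $G_i$ gives $H\subseteq A\cup(G\setminus F_{\textnormal{in}})\subseteq C$, and $\mu(C\setminus A)\le\mu(D)-\mu(F_{\textnormal{in}})\approx 0$.

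Your uniqueness argument via $O$ is needed only because your $C$ is built from the enlarged sets $F_i\cap F^{G_i}_{\textnormal{out}}$. These could capture a point of $F^{G_j}_{\textnormal{in}}\setminus F_j$ under a wrong index $i\neq j$, and $O$ is what absorbs such points.

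One small point on rigor: getting $D\supseteq G$ with $\mu(D)\approx\nu(G)$ takes a routine overspill, as in the paper's \cref{lem:necessary_condition}. For the sup/inf definition you could avoid this entirely. It is enough to take, for each standard $\varepsilon>0$, some $D\in\cF$ with $G\subseteq D$ and $\mu(D)<\nu(G)+\varepsilon$.
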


\begin{proof}
Let $H \in \cH$ be given.
Use Lemma \ref{lem:canonical_disjoint} to find a hyperfinite collection $\{(F_i, G_i) \mid i \le k\}$ such that the sets $G_i$ are pairwise disjoint and $H = \bigcup_{i \le k} (F_i \cap G_i)$. By Corollary \ref{cor:inner_approx}, there exists a family $\{F_{\textnormal{in}}^{G_i}\}_{i \le k} \subseteq \cF$, with $F_{\textnormal{in}}^{G_i} \subseteq G_i$, such that 
\[
\bar\nu \left( \bigcup_{i \leq k} G_i \right) = \bar\mu \left( \bigcup_{i\leq k} F_{\textnormal{in}}^{G_i}\right). \label{e:outer_approx}
\]
Let $H' = \bigcup_{i\leq k} F_i \cap F_{\textnormal{in}}^{G_i}$ and note that $H' \subseteq H$.
Moreover,
\begin{multline}
H\setminus H' 
= 
\left(\bigcup_{i\leq k} F_i \cap G_i\right)
\mathbin{\bigg\backslash}
\left(\bigcup_{i\leq k} F_i \cap F_{\textnormal{in}}^{G_i}\right)
=
\bigcup_{i\leq k} \left( F_i \cap \left(G_i \setminus F_{\textnormal{in}}^{G_i}\right) \right) \subseteq
 \\
\bigcup_{i\leq k} (G_i \setminus F_{\textnormal{in}}^{G_i} )
= 
\left(\bigcup_{i\leq k} G_i \right)
\mathbin{\bigg\backslash}
\left(\bigcup_{i\leq k} F_{\textnormal{in}}^{G_i} \right) \label{the_set}
\end{multline}
where the second equality holds because $F_{\textnormal{in}}^{G_i} \subseteq G_i$ and $\{G_i\}_{i\leq k}$ is a pairwise disjoint family (we leave the elementary verification to the reader). 
The last equality holds for the same reason. 
Clearly, \cref{e:outer_approx} implies that the last set of \cref{the_set} is Loeb null.
Therefore, as $H' \in \mathcal F$ and $H\setminus H'$ is Loeb null, we conclude $H \in \bar{\cF}$
\end{proof}

\section{Resolution of \cref{ksquestion}}\label{sec4}

In this section, we provide a full solution of \cref{ksquestion}. Following \citet{loebeq23}, we introduce the following definition:

\begin{definition}\label{aipdef}
Let $\Omega$ be an internal space equipped with an internal algebra $\cA$. An \emph{internal almost probability measure} $P$ is an internal mapping from $\cA$ to $\NSE{[0, 1]}$ such that
\begin{enumerate}
    \item $P(\emptyset)=0$ and $P(\Omega)=1$;
    \item For disjoint $A, B\in \cA$, we have $P(A\cup B)\approx P(A)+P(B)$. 
\end{enumerate}
\end{definition}

It is clear that every internal probability measure is an internal almost probability measure. In \citet{loebeq23}, we show that we can construct Loeb probability measures from internal almost probability measures:

\begin{lemma}[Lemma 2.2 in \citet{loebeq23}]\label{consloebaip}
Let $(\Omega, \cA, P)$ be an internal almost probability space. Then there exists a standard countably additive probability space $(X, \Loeb{\cA}, \Loeb{P})$ such that:
\begin{enumerate}
    \item $\Loeb{\cA}$ is a $\sigma$-algebra with $\cA\subseteq \Loeb{\cA}$;
    \item $\Loeb{P}(A)=\ST(P)$ on $\cA$;
    \item for every $A\in \Loeb{\cA}$ and every $\varepsilon>0$, there exist $A_i, A_o\in \cA$ such that $A_i\subset A\subset A_o$ and $P(A_o\setminus A_i)<\varepsilon$;
    \item for every $A\in \Loeb{\cA}$, there is a $B\in \cA$ such that $\Loeb{P}(A \triangle B)=0$.
\end{enumerate}
\end{lemma}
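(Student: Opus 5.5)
The plan is the usual Loeb construction, with the almost additivity of $P$ handled through standard parts and overspill. Put $\mu_0=\ST\circ P$ on $\cA$. Then $\mu_0(\emptyset)=0$ and $\mu_0(\Omega)=1$. Since $P(A\cup B)\approx P(A)+P(B)$ for disjoint $A,B$, the map $\mu_0$ is finitely additive, and it is monotone because $P(B)\approx P(A)+P(B\setminus A)\ge P(A)$ for $A\subseteq B$.

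Next I show that $\mu_0$ is countably additive on the standard algebra $\cA$. Suppose $A=\bigcup_n A_n$ with the $A_n\in\cA$ pairwise disjoint. By $\aleph_1$-saturation (countable comprehension of internal sequences), if the union were not already finite we would have $\bigcup_{n\le N}A_n\ne A$ for every $N$, and the countable family of internal conditions would yield a point of $A$ outside every $A_n$. Hence only finitely many $A_n$ are nonempty, and countable additivity reduces to finite additivity. Carathéodory's extension theorem then produces a countably additive $\Loeb P$ on $\sigma(\cA)$, which I complete. I take $\Loeb{\cA}$ to be the completion, so (1) and (2) hold.

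For (3), fix $A\in\Loeb\cA$ and a standard $\varepsilon>0$. The Carathéodory outer measure gives countable covers. Take $C_n\in\cA$ with $A\subseteq\bigcup C_n$ and $\sum_n\mu_0(C_n)<\Loeb P(A)+\varepsilon/4$, and do the same for $\Omega\setminus A$ with sets $D_n$. I then extend the sequences internally and apply overspill. This step needs care because $P$ is only almost additive. For each standard $m$, the inequality $P(\bigcup_{n\le m}C_n)\le\sum_{n\le m}P(C_n)+1/m$ holds by induction on standard $m$. Overspill in the internal extension yields a hyperfinite $N$ such that $A_o=\bigcup_{n\le N}C_n\supseteq A$ and $P(A_o)\le\Loeb P(A)+\varepsilon/2$. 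I avoid summing infinitely many $\approx$ errors by choosing $N$ via overspill on the internal statement ``$P(\bigcup_{n\le m}C_n)<\Loeb P(A)+\varepsilon/2$'', which holds for all standard $m$.

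The complement cover gives $A_i=\Omega\setminus\bigcup_{n\le M}D_n\subseteq A$ in the same way. Finite additivity of $\mu_0$ then gives $P(A_o\setminus A_i)<\varepsilon$. Property (4) follows from (3) by taking $\varepsilon=1/n$ and using saturation: the internal conditions $A_i^n\subseteq B\subseteq A_o^n$ are countably many and finitely satisfiable, since the nested intersections can be arranged, so some $B\in\cA$ satisfies all of them. Then $\Loeb P(A\triangle B)=0$.

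The main obstacle is the overspill step in (3), because the internal extension of the sequence must be chosen so that the internal condition is well defined. I would first try to sidestep it by defining $\Loeb\cA$ directly via inner and outer standard-part approximations, as in the definition of $\bar\cF$. That route makes (3) true by definition and moves the work into proving that $\Loeb\cA$ is a $\sigma$-algebra.
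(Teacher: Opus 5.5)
Your argument is correct and is the standard Loeb construction: $\ST\circ P$ is a premeasure by $\aleph_1$-saturation, then Carath\'eodory plus completion, overspill on an internal extension of the covering sequences for (3), and countable saturation for (4). The paper itself gives no proof and simply cites Lemma 2.2 of \citet{loebeq23}, which rests on this standard construction.

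Two small remarks. First, overspilling the condition with $\varepsilon/2$ only yields $\ST P(A_o\setminus A_i)\le\varepsilon$; run it with $\varepsilon/3$ on each side to get the strict inequality. Second, the worry in your last paragraph is unfounded: the saturated extension is an internal sequence in $\cA$, so the overspill condition is internal and no detour through inner/outer approximations is needed.
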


Since one can contruct a Loeb measure from an internal almost probability measure, for an internal almost probability space $(\Omega, \cA, P)$, with a slight abuse of notation, we call the standard probability space $(\Omega, \Loeb{\cA}, \Loeb{P})$ the \emph{Loeb space generated from} $(\Omega, \cA, P)$. The notions of Loeb extension and equivalence extend naturally to internal almost probability spaces.

\begin{definition}
Let $\cM = (\Omega, \cF, \mu)$ and $\cN = (\Omega, \cG, \nu)$ be two internal almost probability spaces. Then $\cN$ \emph{Loeb extends} $\cM$ if $\Loeb{\cF}\subseteq \Loeb{\cG}$ and $\Loeb{\nu}$ extends $\Loeb{\mu}$ as a function. Furthermore, $\cN$ is \emph{Loeb equivalent} to $\cM$ if $\Loeb{\cF}=\Loeb{\cG}$ and $\Loeb{\nu}=\Loeb{\mu}$.  
\end{definition}

The following theorem from \citet{loebeq23} provides a partial solution to \cref{ksquestion}. 

\begin{theorem}[Theorem 2.6 in \citet{loebeq23}]\label{thm:almost_measure}
Let $(\Omega, \cF, \mu)$ be an internal probability space and let $\cG$ be an internal algebra on $\Omega$. Let $\cH$ be an internal algebra generated by $\cF \cup \cG$. Then $(\Omega, \cH, P)$ is Loeb equivalent to $(\Omega, \cF, \mu)$ for some internal almost probability measure $P$ if and only if $\cH \subseteq \bar{\cF}$.
\end{theorem}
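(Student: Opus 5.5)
The forward direction is immediate, so the substance is the converse. Suppose first that $(\Omega,\cH,P)$ is Loeb equivalent to $(\Omega,\cF,\mu)$ for some internal almost probability measure $P$. By item (1) of \cref{consloebaip}, $\cH\subseteq\Loeb{\cH}$, and Loeb equivalence gives $\Loeb{\cH}=\bar{\cF}$. Hence $\cH\subseteq\bar{\cF}$.

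For the converse, assume $\cH\subseteq\bar{\cF}$. I will construct $P$ internally by an overspill argument. For each $H\in\cH$ and each standard $n$, membership $H\in\bar{\cF}$ gives $A,C\in\cF$ with $A\subseteq H\subseteq C$ and $\mu(C\setminus A)<1/n$. Consider the set
\[\nonumber
\{n\in\NSE{\Nats} : (\forall H\in\cH)(\exists A,C\in\cF)\; A\subseteq H\subseteq C,\ \mu(C\setminus A)<1/n\}.
\]
It is internal, since $\cH$, $\cF$ and $\mu$ are internal, and it contains every standard $n$. By overspill it contains some infinite $N$.

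By transfer of the axiom of choice applied to this internal statement, I obtain internal maps $H\mapsto A_H$ and $H\mapsto C_H$ from $\cH$ into $\cF$ with $A_H\subseteq H\subseteq C_H$ and $\mu(C_H\setminus A_H)\le 1/N\approx 0$. I then define $P(H)=\mu(A_H)$ for $H\neq\Omega$, and $P(\Omega)=1$. This $P$ is internal. Also $P(\emptyset)=0$ because $A_\emptyset=\emptyset$, and $\mu(A_\Omega)\approx 1$, so changing the value at $\Omega$ only alters $P$ infinitesimally there.

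Next I check almost additivity. Take disjoint $H,K\in\cH$. Since $A_H$ and $A_K$ are disjoint and contained in $H\cup K\subseteq C_{H\cup K}$,
\[\nonumber
P(H)+P(K)=\mu(A_H\cup A_K)\le\mu(C_{H\cup K})\approx P(H\cup K).
\]
Conversely, $A_{H\cup K}\subseteq C_H\cup C_K$, so
\[\nonumber
P(H\cup K)\le\mu(C_H)+\mu(C_K)\approx P(H)+P(K).
\]
Hence $P$ is an internal almost probability measure, and \cref{consloebaip} provides its Loeb space. Moreover, for every $H\in\cH$ we have $\ST P(H)=\ST\mu(A_H)=\ST\mu(C_H)$.

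It remains to show Loeb equivalence. For $\bar{\cF}\subseteq\Loeb{\cH}$: each $F\in\cF\subseteq\cH$ satisfies $P(F)\approx\mu(F)$, since $A_F\subseteq F\subseteq C_F$. So any $\mu$-inner/outer approximation of a set in $\bar{\cF}$ by sets of $\cF$ is also a $P$-approximation with the same standard parts, and the measures agree. For $\Loeb{\cH}\subseteq\bar{\cF}$: given $B\in\Loeb{\cH}$ and standard $\varepsilon>0$, item (3) of \cref{consloebaip} gives $H_i\subseteq B\subseteq H_o$ in $\cH$ with $P(H_o\setminus H_i)<\varepsilon$. Then $A_{H_i}\subseteq B\subseteq C_{H_o}$, and
\[\nonumber
\mu(C_{H_o}\setminus A_{H_i})\approx P(H_o)-P(H_i)<\varepsilon .
\]
So $B\in\bar{\cF}$ with matching measure.

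The main obstacle is making the choice of approximants uniform and internal across all of $\cH$; the overspill step above is what handles it. Everything after that is routine bookkeeping with standard parts.
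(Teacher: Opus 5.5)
The paper does not prove this statement; it quotes it from \citet{loebeq23}, so there is no in-paper argument to compare yours with. Taken on its own terms, your proof is correct.

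The overspill set is internal and contains every standard $n$ precisely because $\cH\subseteq\bar{\cF}$. Transferred choice then gives internal selectors $H\mapsto A_H$ and $H\mapsto C_H$. The two sandwich inequalities yield almost additivity, and setting $P(\Omega)=1$ by hand is a careful touch.

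Two small points of precision:
\begin{itemize}
\item In your last display, $P$ is only almost additive, so $P(H_o)-P(H_i)\approx P(H_o\setminus H_i)<\varepsilon$. What you actually get is $\mu(C_{H_o}\setminus A_{H_i})<2\varepsilon$, which is enough.
\item The inclusion $\bar{\cF}\subseteq\Loeb{\cH}$ uses the converse of item (3) of \cref{consloebaip}: a set sandwiched between members of $\cH$ with $P$-gap below every standard $\varepsilon$ lies in $\Loeb{\cH}$. This holds because the Loeb space is complete (it is needed for the statement to make sense, since $\bar{\cF}$ is complete). It is not among the listed items, so you should say so.
\end{itemize}

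On the route: the ``main obstacle'' you identify comes from your choice of approximants, not from the problem. Consider instead the internal outer measure $P(H)=\inf\{\mu(C): H\subseteq C\in\cF\}$.
\begin{itemize}
\item It is internal by the internal definition principle, with $P(\emptyset)=0$ and $P(\Omega)=1$.
\item For $H\in\bar{\cF}$ one gets $\ST P(H)=\bar{\mu}(H)$ directly, since $P(H)$ lies below every such $\mu(C)$ and within an infinitesimal of one of them.
\item Hence $P\approx\bar{\mu}$ on $\cH$, and almost additivity is inherited from the additivity of $\bar{\mu}$ on $\cH\subseteq\bar{\cF}$.
\item Loeb equivalence follows by the same sandwich argument as your last paragraph. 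Approximate $H_i$ from inside and $H_o$ from outside by members of $\cF$, which is possible since $\cH\subseteq\bar{\cF}$.
\end{itemize}
This route needs no overspill and no transferred choice. What your construction buys in exchange is a stronger uniform fact than the theorem requires: a single infinitesimal $1/N$ bounding $\mu(C_H\setminus A_H)$ simultaneously for all $H\in\cH$.
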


Thus, by \cref{thm:main_inclusion} and \cref{thm:almost_measure}, we immediately have: 

\begin{corollary}\label{aiplecor}
Suppose $\cM = (\Omega, \cF, \mu)$ and $\cN = (\Omega, \cG, \nu)$ are two Loeb equivalent general internal probability spaces and $\cH$ is the internal algebra generated by $\cF \cup \cG$. Then, there exists an internal almost probability measure $P$ such that $(\Omega, \cH, P)$ is Loeb equivalent to $(\Omega, \cF, \mu)$. 
\end{corollary}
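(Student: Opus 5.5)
The statement is \cref{aiplecor}, which should follow by chaining two results already available. I would argue as follows.

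First, invoke \cref{thm:main_inclusion}. Since $\cM$ and $\cN$ are Loeb equivalent internal probability spaces and $\cH$ is the internal algebra generated by $\cF\cup\cG$, that theorem gives $\cH\subseteq\Loeb{\cF}$.

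Second, apply \cref{thm:almost_measure}, with $(\Omega,\cF,\mu)$ as the internal probability space and $\cG$ as the internal algebra. The theorem's hypotheses are met: $\cG$ is an internal algebra on $\Omega$ and $\cH$ is the internal algebra generated by $\cF\cup\cG$. The only remaining condition is the right-hand side of its equivalence, $\cH\subseteq\Loeb{\cF}$, which the first step supplies. The ``if'' direction then produces an internal almost probability measure $P$ on $\cH$ such that $(\Omega,\cH,P)$ is Loeb equivalent to $(\Omega,\cF,\mu)$. Here Loeb equivalence is understood in the sense of the definition extended to internal almost probability spaces via \cref{consloebaip}.

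There is no real obstacle. The substantive work, namely the inclusion $\cH\subseteq\Loeb{\cF}$, was done in \cref{sec3}. The only point to check is that the notion of Loeb equivalence delivered by \cref{thm:almost_measure} matches the one in the corollary. It does, because for the genuine probability measure $\mu$ the Loeb space built by \cref{consloebaip} agrees with the usual Loeb extension $(\Omega,\Loeb{\cF},\Loeb{\mu})$. This agreement holds because items (2) and (3) of \cref{consloebaip} characterize the same inner/outer approximation $\sigma$-algebra.
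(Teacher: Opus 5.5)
Your proposal is correct and is exactly the paper's argument: the paper obtains the corollary immediately by feeding the inclusion $\cH\subseteq\Loeb{\cF}$ from \cref{thm:main_inclusion} into the ``if'' direction of \cref{thm:almost_measure}. One small correction to your closing remark: items (2)--(3) of \cref{consloebaip} only show that the constructed $\sigma$-algebra consists of approximable sets (the $\sigma$-algebra generated by $\cA$ also satisfies (1)--(4)), so they do not characterize it; its agreement with the usual Loeb extension for a genuine probability measure comes from the construction itself, and the paper likewise takes this compatibility for granted.
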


Thus, to resolve \cref{ksquestion}, it is sufficient to show that there exists an internal probability measure $P'$ such that $(\Omega, \cH, P')$ is Loeb equivalent to $(\Omega, \cH, P)$.
To bridge the gap between almost additivity and exact strict additivity, We introduce the following definition and an important theorem by \citet{KR83}.

\begin{definition}[$\Delta$-approximately additive]\label{def:delta_add}
Let $\mathcal{A}$ be an algebra of sets. A set function $f : \mathcal{A} \to \Reals$ is called \textit{$\Delta$-approximately additive} for a constant $\Delta \ge 0$ if $f(\emptyset) = 0$ and for any disjoint sets $A, B \in \mathcal{A}$, we have $|f(A \cup B) - f(A) - f(B)| \le \Delta$.
\end{definition}

\begin{theorem}[\citet{KR83}]\label{thm:kr}
If $\mathcal{A}$ is an algebra of sets and $f : \mathcal{A} \to \Reals$ is a $\Delta$-approximately additive set function, then there exists a finitely additive set function $m : \mathcal{A} \to \Reals$ such that $|f(A) - m(A)| \le 45\Delta$ for all $A \in \mathcal{A}$.
\end{theorem}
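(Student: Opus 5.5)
The statement is the theorem of \citet{KR83}. I would prove it in three stages: reduce to finite algebras, restate the finite problem as a combinatorial inequality by duality, and prove that inequality with expander graphs. The last stage is where all the difficulty lies.

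\textbf{Reduction to finite algebras.} Suppose the theorem holds with constant $45$ for every finite subalgebra $\mathcal{A}_0\subseteq\mathcal{A}$. For each finite $\mathcal{A}_0$, the set of finitely additive $m$ on $\mathcal{A}$ with $|m(A)-f(A)|\le 45\Delta$ for all $A\in\mathcal{A}_0$ can be viewed as a closed subset of the compact product $\prod_{A\in\mathcal{A}}[f(A)-45\Delta',\,f(A)+45\Delta']$. Here $\Delta'$ is any bound making every candidate lie in that box; alternatively one constrains $m$ only on $\mathcal{A}_0$ and lets it be free elsewhere.

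A cleaner route is a compactness argument on the family of finite subalgebras, directed by inclusion:
\begin{itemize}
\item choose a solution $m_{\mathcal{A}_0}$ on each finite $\mathcal{A}_0$;
\item take a cluster point of the net $(m_{\mathcal{A}_0})$ in $\prod_{A\in\mathcal{A}}[f(A)-45\Delta,\,f(A)+45\Delta]$, which is compact by Tychonoff.
\end{itemize}
Finite additivity and the bound $|f-m|\le 45\Delta$ are pointwise closed conditions, so they pass to the limit. We may therefore assume $\mathcal{A}$ is the power set of an $n$-element set.

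\textbf{Duality.} In the finite case, the existence of $m$ is a linear feasibility problem in the $n$ atom-weights. By Farkas' lemma (finite-dimensional Hahn--Banach), $m$ exists if and only if
\[
\sum_{i\le p} f(A_i)-\sum_{j\le q} f(B_j)\le 45\Delta\,(p+q)
\]
whenever $A_1,\dots,A_p,B_1,\dots,B_q\subseteq\{1,\dots,n\}$ satisfy $\sum_i 1_{A_i}=\sum_j 1_{B_j}$. Rational weights suffice, and repetitions are allowed.

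The task is then purely combinatorial: show that any two families of sets with the same multiplicity function have $f$-sums differing by $O(\Delta)$ per set. If the families are nested chains or partitions, this follows by telescoping $\Delta$-additivity, but naive regrouping costs $\Delta$ per union operation. That gives a bound like $\Delta\log$ or $\Delta\cdot(\text{number of merges})$ rather than $O(\Delta)$ per set.

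\textbf{The combinatorial inequality (main obstacle).} To get a constant independent of the sizes, I would use Kalton and Roberts' device of bounded-degree bipartite expanders (concentrators). These are graphs on $N$ left and $N$ right vertices with degree $\le d$, in which every left set of size $\le N/2$ has at least as many neighbours on the right.

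The plan is to prove, by induction on scale, a key lemma: if $f$ is $\Delta$-additive and $A,B$ are disjoint with $|A|=|B|$, then $|f(A)-f(B)|$, after a suitable normalisation of $f$, is at most $C\Delta$ with $C$ independent of $|A|$. The mechanism is as follows:
\begin{itemize}
\item use Hall's theorem on the expander to produce matchings that reorganise $A$ into $B$ through a bounded number of disjoint-union moves at each scale;
\item arrange these moves so that the errors form a geometric series in the scale, rather than accumulating linearly in the number of steps.
\end{itemize}
The regrouping inequality from the duality step then follows by writing both families in terms of the common multiplicity layers $\{x:\sum_i 1_{A_i}(x)\ge t\}$.

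I expect the expander step to be the real difficulty. Getting the explicit constant $45$ requires tracking the degree and expansion parameters of specific concentrators. I would first aim for some absolute constant $K$ and only afterwards optimise it.
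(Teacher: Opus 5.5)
The paper gives no proof of this statement. It is quoted from \citet{KR83} and used as a black box, and in \cref{thm:final_measure} any standard constant in place of $45$ would suffice, since only $C\delta\approx 0$ is needed. Judged on its own, your first two stages are fine. The net argument works once each $m_{\mathcal A_0}$ is extended to all of $\mathcal A$ (e.g.\ by $f$ off $\mathcal A_0$) and one notes that, for fixed disjoint $A,B$, additivity holds eventually along the net. The Farkas reformulation with integer multiplicities is also correct. Replacing each $B_j$ by $S\setminus B_j$, at cost $q\Delta$, reduces everything to the real content: for every $k$-fold cover $A_1,\dots,A_n$ of $S$, i.e.\ $\sum_i 1_{A_i}=k\,1_S$, one needs $\bigl|\sum_i f(A_i)-k f(S)\bigr|\le Kn\Delta$ with $K$ independent of $n$, $k$ and $S$.

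The third stage is where your proposal has a genuine gap, and the key lemma you propose for it is false. If $f$ is an honest measure with unequal atom weights, then $\Delta=0$ but $f(A)\neq f(B)$ for suitable disjoint $A,B$ with $|A|=|B|$. Any ``normalisation'' that makes the claim true either presupposes the additive approximant you are trying to build, or (like symmetrising over permutations) discards exactly the information the theorem is about. The mechanism fails for a related reason. Cutting a set into disjoint pieces and gluing disjoint pieces both preserve the multiplicity function, so no sequence of such moves can ``reorganise $A$ into $B$'' when $1_A\neq 1_B$. Cardinality is simply not an invariant that $\Delta$-additivity sees.

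What the expanders have to supply is a \emph{compression} of covers. Assume $k\le n/2$; otherwise pass to complements, at cost $n\Delta$. Fix a bipartite graph $G$ from $\{1,\dots,n\}$ to $\{1,\dots,\lfloor n/2\rfloor\}$ with $O(n)$ edges, in which every set of at most $\lfloor n/2\rfloor$ inputs can be matched into the outputs (a linear-size concentrator). For each point $x$, Hall's theorem gives an injection $\phi_x$ of $M_x=\{i : x\in A_i\}$ into the outputs along edges of $G$. Cut $A_i$ into the pieces $A_{(i,j)}=\{x\in A_i:\phi_x(i)=j\}$ and reglue them as $B_j=\bigsqcup_i A_{(i,j)}$. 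This costs at most $2|E(G)|\Delta$ and yields a $k$-fold cover of $S$ by $\lfloor n/2\rfloor$ sets. Iterating produces the geometric series and a bound of $O(n)\Delta$. The explicit value $45$ comes from the edge density of the concentrators used, which your proposal never reaches.
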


We now show that the internal almost probability measure $P$ in \cref{aiplecor} can be canonically upgraded into an internal probability measure $P'$, hence resolving \cref{ksquestion}.

\begin{theorem}\label{thm:final_measure}
Suppose $\cM = (\Omega, \cF, \mu)$ and $\cN = (\Omega, \cG, \nu)$ are two Loeb equivalent general internal probability spaces and $\cH$ is the internal algebra generated by $\cF \cup \cG$. Then there exists an internal probability measure $P'$ such that $(\Omega, \cH, P')$ is Loeb equivalent to $\cM$.
\end{theorem}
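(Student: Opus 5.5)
We start from \cref{aiplecor}, which gives an internal almost probability measure $P$ on $\cH$ with $(\Omega,\cH,P)$ Loeb equivalent to $\cM$. The plan is to replace $P$ by an exactly finitely additive internal measure that is infinitesimally close to $P$ on every set of $\cH$. Two internal almost probability measures on the same algebra that differ only infinitesimally have the same standard parts. By \cref{consloebaip} their Loeb extensions are then determined identically: the inner and outer approximation condition (3) refers only to the algebra $\cH$ and to standard parts of measures. So the two Loeb spaces coincide, and hence both are Loeb equivalent to $\cM$.

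To get such a measure, define $\Delta = \max\{|P(A\cup B)-P(A)-P(B)| : A,B\in\cH,\ A\cap B=\emptyset\}$. Since $P$ and $\cH$ are internal, this is an internal supremum. Every value in the set is infinitesimal, and the supremum is attained up to an arbitrarily small standard error. Hence $\Delta\approx 0$: if $\Delta > r$ for some standard $r>0$, then some internal pair $(A,B)$ would witness an error exceeding $r$, which contradicts almost additivity. More safely, I take $\Delta$ to be the internal supremum and argue that for each standard $r>0$ the internal set of pairs with error $>r$ is empty. Then $P$ is $\Delta$-approximately additive, and by transfer of \cref{thm:kr} there is an internal finitely additive $m:\cH\to\NSE{\Reals}$ with $|P(A)-m(A)|\le 45\Delta\approx 0$ for all $A\in\cH$.

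The main obstacle is that $m$ need not be a probability measure: it may take negative values, and $m(\Omega)$ may differ infinitesimally from $1$. To repair it, I take the positive part in the internal Jordan decomposition sense, $m^+(A)=\sup\{m(B): B\subseteq A,\ B\in\cH\}$. By transfer of the standard fact that the positive variation of a bounded finitely additive set function on an algebra is finitely additive, $m^+$ is internal, finitely additive and nonnegative. Moreover, $m^+(A)-m(A) = \sup_{B\subseteq A}(-m(A\setminus B)) = -\inf_{C\subseteq A} m(C)$, and $m(C)\ge P(C)-45\Delta\ge -45\Delta$. So $0\le m^+(A)-m(A)\le 45\Delta$, and $m^+$ is still infinitesimally close to $P$. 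Boundedness holds because $|m|\le 1+45\Delta$. Finally, set $P'=m^+/m^+(\Omega)$. Since $m^+(\Omega)\approx P(\Omega)=1$, this $P'$ is an internal finitely additive probability measure on $\cH$ with $P'(A)\approx P(A)$ for all $A\in\cH$.

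It remains to check Loeb equivalence of $(\Omega,\cH,P')$ and $(\Omega,\cH,P)$ directly. A set $B$ is in $\Loeb{\cH}$ with respect to $P'$ if and only if the sup of inner standard parts equals the inf of outer standard parts. Since $\ST P'=\ST P$ on $\cH$, this criterion is identical for both, and the values agree. By \cref{consloebaip}(3) the Loeb space of $P$ is characterized by the same criterion. Combining this with \cref{aiplecor} gives that $(\Omega,\cH,P')$ is Loeb equivalent to $\cM$.
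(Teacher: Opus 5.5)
Your proposal is correct and follows the paper's proof essentially step for step. You bound the additivity error of $P$ by an infinitesimal internal supremum $\Delta$ (the paper's $\delta=\sup E$), transfer Kalton--Roberts to get an internal finitely additive $m$ within $45\Delta$ of $P$, pass to the positive variation $m^+$ with $0\le m^+(A)-m(A)\le 45\Delta$, and normalize by $m^+(\Omega)\approx 1$. The only differences are minor: you get finite additivity of $m^+$ by transferring the standard Jordan-decomposition fact rather than checking two-set additivity directly, and you spell out through the inner/outer standard-part criterion why $P'\approx P$ on $\cH$ yields the same Loeb space, which the paper simply asserts.
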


\begin{proof}
By \cref{aiplecor}, there exists an internal almost probability measure $P$ such that $(\Omega, \cH, P)$ is Loeb equivalent to $\cM$.
Consider the set of all additivity errors of $P$:
\begin{equation}
E = \left\{ |P(A \cup B) - P(A) - P(B)| : A, B \in \cH, A \cap B = \emptyset \right\}.\nonumber
\end{equation}
Since $\cH$ and $P$ are internal, the internal definition principle implies that $E$ is an internal subset of $\NSE{\Reals}$. Since $P$ is an internal almost probability measure, we have $x\approx 0$ for all $x\in E$. Since $E$ is internal and bounded, we let 
$\delta = \sup E$. We claim that $\delta \approx 0$. If not, there would exist a standard real $\varepsilon > 0$ such that $\delta > \varepsilon$. Then there exists an $x \in E$ such that $x > \varepsilon$, which contradicts the fact that all elements of $E$ are infinitesimal. Hence, $\delta$ must be a non-negative infinitesimal in $\NSE{\Reals}$.

By the definition of $E$ and $\delta$, we have $|P(A \cup B) - P(A) - P(B)| \le \delta$ for all disjoint $A, B \in \cH$. Since $P(\emptyset) = 0$, $P$ is an internal $\delta$-approximately additive set function on the internal algebra $\cH$. By the Transfer Principle applied to Theorem \ref{thm:kr}, there exists an \textit{internal}, hyperfinitely additive set function $m : \cH \to \NSE{\Reals}$ such that
\begin{equation}\label{eq:6}
|m(A) - P(A)| \le 45\delta \approx 0, \quad \forall A \in \cH.
\end{equation}

Although $m$ is hyperfinitely additive, it may take negative values and we might have $m(\Omega)\neq 1$.  
Since $m$ is hyperfinitely additive, we can define its internal positive variation $m^+ : \cH \to \NSE{\NNReals}$ by:
\begin{equation}
m^+(A) = \sup \{ m(B) : B \subseteq A, B \in \cH \}.\nonumber
\end{equation}

Note that $m(\Omega) \ge P(\Omega) - 45\delta = 1 - 45\delta$. Since $\delta \approx 0$, we have $m(\Omega)>0$, and thus $m^+(\Omega) \ge m(\Omega) > 0$. Define $P' : \cH \to \NSE{\Reals}$ by:
\begin{equation}
P'(A) = \frac{m^+(A)}{m^+(\Omega)}, \quad \forall A \in \cH.\nonumber
\end{equation}
We show that $P'$ is an internal probability measure, and that $P'(A)\approx P(A)$ for $A\in \cH$. 

We first show that $P'$ is hyperfinitely additive. It suffices to show that the internal positive variation $m^+$ is hyperfinitely additive. By the internal induction principle, it suffices to show that, for two disjoint $A_1, A_2\in \cH$, we have $m^{+}(A_1 \cup A_2)=m^{+}(A_1)+m^{+}(A_2)$.
For any internal subset $B \subseteq A_1 \cup A_2$ with $B \in \cH$, we can partition $B$ into $B_1 = B \cap A_1$ and $B_2 = B \cap A_2$. Since $m$ is exactly finitely additive, we have $m(B) = m(B_1) + m(B_2)$.
Since $B_1 \subseteq A_1$ and $B_2 \subseteq A_2$, it follows from the definition of $m^+$ that $m(B_1) \le m^+(A_1)$ and $m(B_2) \le m^+(A_2)$. Thus, $m(B) \le m^+(A_1) + m^+(A_2)$. Taking the supremum yields $m^+(A_1 \cup A_2) \le m^+(A_1) + m^+(A_2)$.
Conversely, for any internal subsets $B_1 \subseteq A_1$ and $B_2 \subseteq A_2$ with $B_1, B_2\in \cH$, we have $B_{1}\cup B_{2}\subset A_1\cup A_2$. 
Therefore, $m^+(A_1 \cup A_2) \ge m(B_1 \cup B_2) = m(B_1) + m(B_2)$. Taking the supremum over $B_1$ and $B_2$ independently, we get $m^+(A_1 \cup A_2) \ge m^+(A_1) + m^+(A_2)$. Consequently, we have shown that $P'$ is hyperfinitely additive. 
From finite additivity we obtain $P'(\emptyset) = P'(\emptyset\cup\emptyset)= 2P'(\emptyset) = 0$;  moreover $P'(\Omega)=1$ by construction, so we conclude that $P'$ is an internal probability measure on $\cH$. 

It remains to show that $P'(A) \approx P(A)$ for all $A \in \cH$. We have\begin{align*}
    m^+(A) - m(A) &= \sup\{m(B)-m(A) : B \subseteq A, B \in \mathcal{H}\}\\& =\sup\{m(B)-m(B)-m(A\setminus B):B \subseteq A, B \in \mathcal{H}\}\\&=\sup\{-m(A\setminus B):B \subseteq A, B \in \mathcal{H}\}\\& \overset{A\setminus B=C}{=}\sup \{-m(C):C\subseteq A, C \in \mathcal{H}\}\\&=-\inf \{m(C):C\subseteq A, C \in \mathcal{H}\}
\end{align*} 
For any internal subset $C\in \cH$ with $C \subseteq A$, since $P$ maps $\cH$ to $\NSE{\NNReals}$ and $|m(A)-P(A)|\leq 45\delta$ for all $A\in \cH$,
we have:
\[
m(C) \ge P(C) - 45\delta \ge -45\delta.\nonumber
\]
Then $0 \le  m^+(A) - m(A) = -\inf \{m(C):C\subseteq A, C \in \mathcal{H}\} \le 45\delta$. 

Combining this with Equation (\ref{eq:6}), we obtain:
\[
|m^+(A) - P(A)| \le |m^+(A) - m(A)| + |m(A) - P(A)| \le 45\delta + 45\delta = 90\delta.\nonumber
\]
Since $\delta \approx 0$, we have $m^+(A) \approx P(A)$ for all $A \in \cH$. In particular, we have $m^+(\Omega) \approx P(\Omega) = 1$. Hence, we have:
\begin{equation}
P'(A) = \frac{m^+(A)}{m^+(\Omega)} \approx \frac{P(A)}{1} = P(A).\nonumber
\end{equation}

Since $P'(A) \approx P(A)$ for all $A \in \cH$, we have $\Loeb{P'}=\Loeb{P}$. 
Since $(\Omega, \cH, P)$ is Loeb equivalent to $(\Omega, \cF, \mu)$, $(\Omega, \cH, P')$ is also Loeb equivalent to $(\Omega, \cF, \mu)$, completing the proof.
\end{proof}

\printbibliography

\end{document}